\newtheorem{theorem}{Theorem}[section]
\newtheorem{proposition}[theorem]{Proposition}
\newtheorem{lemma}[theorem]{Lemma}
\newtheorem{corollary}[theorem]{Corollary}
\theoremstyle{definition}
\newtheorem{remark}[theorem]{Remark}
\newcommand{\B}{\mathcal{B}}
\newcommand{\F}{\mathcal{F}}
\newcommand{\W}{\mathcal{W}}
\newcommand{\gene}{\mathcal{L}}
\newcommand{\DD}{\mathbb{D}}
\newcommand{\R}{\mathbf{R}}
\DeclareMathOperator{\esssup}{ess\sup}
\begin{document}

\title
[A new proof for the convergence of Picard's filter]
{A new proof for the convergence of Picard's filter
using partial Malliavin calculus}

\author{Hideyuki Tanaka}
\date{May.21, 2014}
%\date{\today}
\address{
Department of Mathematical Sciences, Ritsumeikan University, 1-1-1 Nojihigashi, Kusatsu, Shiga
525-8577, Japan 
%Tel: +81-7-7561-2624 \ \ 
%Fax: +81-7-7561-2629
}
\email{hitanaka@fc.ritsumei.ac.jp}
\keywords{Nonlinear filtering, Numerical approximation, Picard's filter, Malliavin calculus}
\subjclass[2000]{60G35, 60H07, 65C20}
\maketitle

\begin{abstract}
The discrete-time approximation for nonlinear filtering problems is related to 
both of strong and weak approximations of stochastic differential equations. 
In this paper, we propose a new method of proof 
for the convergence of approximate nonlinear filter analyzed by Jean Picard (1984), 
and show a more general result than the original one. 
For the proof, we develop an analysis of Hilbert space valued  functionals on Wiener space. 
\end{abstract}

\section{Introduction}
The aim of this paper is to determine the convergence rate of Picard's filter for nonlinear filtering 
in a more general condition than that of Picard (\cite{P84}), 
and to understand deeply why the scheme can perform with the rate. 
Although Picard's filter is based on an Euler-type approximation of stochastic differential equations, 
the error estimate does not rely on the standard argument of strong and weak convergence of the Euler-type scheme. 
As seen in the following, the properties of stochastic integrals under a conditional probability 
make the proof of convergence much more complicated. 

Let us first formulate the nonlinear filtering problem with continuous time observations. 
Consider a stochastic process $(X_t)_{t \geq 0}$ (often called the signal process) defined as the solution of 
an $N$-dimensional stochastic differential equation 
\begin{eqnarray}\label{eq:sde}
X_t = x + \int_0^t b(X_s)ds + \int_0^t\sigma(X_s)dB_s
\end{eqnarray} 
with $x \in \R^N$ and an $N$-dimensional standard Brownian motion $B = (B_t)_{t \geq 0}$ 
on a probability space $(\Omega, \F, P)$ 
with a filtration $(\F_t)_{t \geq 0}$ satisfying the usual conditions. 
We observe another $d$-dimensional process $(Y_t)_{t \geq 0}$ (called the observation process) defined by
\[
Y_t = \int_0^t h(X_s)ds + W_t
\]
where $W = (W_t)_{t \geq 0}$ is a $d$-dimensional standard Brownian motion independent of $B$. 
We denote the filtrations associated to $B$ and $Y$ with $P$-null sets by $(\F_t^B)$ and $(\F_t^Y)$ respectively. 
The primary goal of nonlinear filtering problem is 
to investigate the evolution of the conditional distribution of $X_T$ under the observation $(Y_t)_{0\leq t \leq T}$. 
In other words, we are interested in computing the value 
\begin{equation}\label{eq:target}
E^P[g(X_T)|\F_T^Y].
\end{equation}
For this purpose, we consider the new probability measure $Q$ on $\F_\infty = \sigma(\cup_{t \geq 0}\F_t)$ 
under which $(Y_t)$ is a standard Brownian motion independent of $(X_t)$, 
and $(X_t)$ has the same law under $P$ and $Q$. 
Throughout the paper, we denote the expectation under $Q$ by $E[ \ \cdot \ ]$. 
Then the conditional expectation (\ref{eq:target}) has the expression 
\[
E^P[g(X_T)|\F_T^Y] = \frac{E[g(X_T)\Phi_T|\F_T^Y]}{E[\Phi_T|\F_T^Y] } 
\]
with the Radon-Nikodym derivative 
\begin{align*}
\Phi_t = \exp\Big( \sum_{j=1}^d \Big(\int_0^t h^j(X_s)dY^j_s -\frac{1}{2}\int_0^t (h^j)^2(X_s)ds \Big) \Big). 
\end{align*}
This is called the Kallianpur-Striebel formula (cf.\ \cite{KaSt68}, \cite{BC09}). 
We need time discretization methods in order to compute $E[g(X_T)\Phi_T|\F_T^Y]$ 
since the stochastic integral term cannot be computed exactly. 

In what follows, we discuss a discrete-time approximation scheme for $\Phi_t$ 
under the probability measure $Q$. 
Let us use the notation 
$\|\cdot\|_p := E[|\cdot|^p]^{1/p}$. 
Fix $T>0$ and $\eta(t) = t_i := iT/n$ if $t \in [iT/n, (i+1)T/n)$. 
We now consider an approximation by a Riemann sum for $\Phi_T$ as
\begin{align*}
\tilde{\Phi}_t \equiv \tilde{\Phi}_t(X) := \exp\Big( \sum_{j=1}^d \Big(\int_0^t h^j(X_{\eta(s)})dY^j_s 
- \frac{1}{2}\int_0^t (h^j)^2(X_{\eta(s)})ds \Big) \Big).
\end{align*}
Jean Picard showed the following surprizing result of $L^2$-convergence of $E[g(X_T)\tilde{\Phi}_T|\F_T^Y]$. 
\begin{theorem}[\cite{P84}] 
Assume that $g$, $b$ and $\sigma$ are Lipshitz continuous and $h \in C_b^2(\R^N;\R^d)$. 
Then
\begin{equation}\label{eq:picard}
\Big\| E[g(X_T)\Phi_T|\F_T^Y] 
- E[g(X_T)\tilde{\Phi}_T|\F_T^Y]  \Big\|_2 \leq \frac{C_T}{n}.
\end{equation}
\end{theorem}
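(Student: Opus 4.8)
The plan is to turn the conditional-expectation difference into an $L^2$-estimate for $g(X_T)(\Phi_T-\tilde{\Phi}_T)$ and then to extract one extra factor $n^{-1/2}$ beyond the strong rate by integrating by parts in the signal noise $B$. First I would use that, under $Q$, both $\Phi$ and $\tilde{\Phi}$ are exponential martingales, $d\Phi_t=\Phi_t\sum_j h^j(X_t)\,dY^j_t$ and $d\tilde{\Phi}_t=\tilde{\Phi}_t\sum_j h^j(X_{\eta(t)})\,dY^j_t$. Subtracting, using $\Phi_0=\tilde{\Phi}_0=1$, and splitting the integrand gives
\begin{align*}
\Phi_T-\tilde{\Phi}_T
&=\sum_{j=1}^d\int_0^T \Phi_t\big(h^j(X_t)-h^j(X_{\eta(t)})\big)\,dY^j_t\\
&\quad+\sum_{j=1}^d\int_0^T\big(\Phi_t-\tilde{\Phi}_t\big)h^j(X_{\eta(t)})\,dY^j_t.
\end{align*}
The second sum is a remainder that a Gronwall argument will absorb once the first sum, call it $\Xi_T$, is controlled; so the crux is to show $\big\|E[g(X_T)\Xi_T\mid\F_T^Y]\big\|_2\le C_T/n$.

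Next I would expand the increment by Itô's formula,
\[
h^j(X_t)-h^j(X_{\eta(t)})=\int_{\eta(t)}^t\gene h^j(X_s)\,ds+\int_{\eta(t)}^t\nabla h^j(X_s)\sigma(X_s)\,dB_s .
\]
Inserting the bounded-variation part yields an integrand of size $O(t-\eta(t))=O(1/n)$, whose contribution to $\Xi_T$ is estimated directly by the Itô isometry (the mesh enters squared). All the difficulty is carried by the martingale increment $R^j_t:=\int_{\eta(t)}^t\nabla h^j(X_s)\sigma(X_s)\,dB_s$, which is only of order $(t-\eta(t))^{1/2}$ in $L^2$. A crude isometry bound for $\sum_j\int_0^T\Phi_t R^j_t\,dY^j_t$ then produces merely the strong rate $n^{-1/2}$, so the point is to gain one more half-power of the mesh.

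That gain is where the partial Malliavin calculus enters. Because $B$ and $Y$ are independent under $Q$, the conditioning $E[\,\cdot\mid\F_T^Y]$ is an averaging over the $B$-randomness with the $Y$-path frozen, and $R^j_t$ is a stochastic integral purely in the $B$-directions over the short interval $[\eta(t),t]$. For each fixed $t$ I would integrate by parts in $B$ so as to pair $R^j_t$ against the $B$-Malliavin derivative of the smooth weight $g(X_T)\Phi_t$, the relevant term being
\[
\int_{\eta(t)}^t\big\langle D^B_s\big(g(X_T)\Phi_t\big),\,\nabla h^j(X_s)\sigma(X_s)\big\rangle\,ds ,
\]
which carries the extra power $t-\eta(t)$. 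Here the Lipschitz assumption on $g$ places $g(X_T)$ in the domain of $D^B$ with $D^B_s g(X_T)=\nabla g(X_T)\,D^B_sX_T$, and the hypotheses $h\in C_b^2$, together with standard $L^p$-bounds on $X$, on its Malliavin derivative $D^BX$, and on $\Phi$, make every weight produced by the parts integration $O(1)$ in each $L^p$. These derivatives are $L^2([0,T])$-valued, so the bookkeeping is most naturally phrased in terms of the Hilbert-space-valued functionals on Wiener space that this paper develops. With the integrand thus formally reduced to order $1/n$, the outer $dY$-integral would finally be handled by combining the Itô isometry with the fact that conditional expectation is an $L^2$-contraction.

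The step I expect to be the main obstacle is precisely the interplay between this $B$-integration by parts and the outer stochastic integral against $dY$: one cannot simultaneously freeze $Y$ (to perform the $B$-parts integration) and keep $dY^j_t$ active, and moreover $D^B_s\Phi_t$ reintroduces $Y$-stochastic integrals that must be paired correctly with the outer $dY^j_t$ for the genuine cancellation — rather than only the strong bound — to emerge. Organising these nested integrals uniformly in $t$ within the Hilbert-space-valued framework, checking the requisite $L^p$-integrability of all the weights, and then running a Gronwall estimate on the remainder sum to absorb it, is what finally yields the bound with a constant $C_T$ of the claimed form.
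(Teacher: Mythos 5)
There is a genuine gap, and it sits exactly where your plan puts the word ``absorb''. Your remainder term $\sum_j\int_0^T(\Phi_t-\tilde\Phi_t)h^j(X_{\eta(t)})\,dY^j_t$ is \emph{not} a lower-order error: $\|\Phi_t-\tilde\Phi_t\|_p$ is only $O(n^{-1/2})$ (the strong rate), and the full rate $1/n$ appears only \emph{after} conditioning on $\F_T^Y$. So the remainder, paired with $g(X_T)$ inside $E[\,\cdot\,|\F_T^Y]$, is a quantity of exactly the same hard type as your main term $\Xi_T$, not something a crude bound can swallow. To even set up a Gronwall inequality you must first interchange conditioning with the $dY$-integral, $E[g(X_T)\int_0^T U_t\,dY^j_t|\F_T^Y]=\int_0^T E[g(X_T)U_t|\F_t^Y]\,dY^j_t$ (legitimate here because $B\perp Y$ and all weights are $\F_T^B$-measurable; this is Picard's Proposition~1, cited in Section~3.2 of the paper), and then the It\^o isometry replaces the weight $g(X_T)$ by $g(X_T)h^j(X_{\eta(t)})$; iterating, the weight class keeps growing, and the inequality closes only if you can prove the main-term estimate \emph{uniformly over a weight class stable under these multiplications}, with a constant that does not degrade with the number of $h$-factors (in particular not through derivative norms of the weight, which grow with each factor). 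That uniform estimate is precisely the shape of the paper's bound $\|E_1(\rho,Z)\|_p\le (C/n)\|Z\Gamma_T(\rho)\|_p$ for an arbitrary functional $Z$, whose proof (Lemma~3.9) is arranged so that only $\|Z\Gamma_T(\rho)\|_p$ — and no derivative of $Z$ — enters; your sketch contains nothing playing this role.

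The second gap is the one you flag yourself. Your decomposition via the exponential-martingale SDEs is genuinely different from the paper's: the paper uses the interpolation $e^a-e^b=(a-b)\int_0^1e^{\rho a+(1-\rho)b}d\rho$, which concentrates all the $Y$-dependence in the single \emph{non-adapted} weight $\Gamma_T$, and this is exactly why It\^o calculus in $Y$ is unavailable there ($f_s=J_s^B(Z\Gamma_T(\rho))$ is not $\F_s^B\vee\F_s^Y$-adapted) and why the paper must expand $\int_0^T\int_{\eta(s)}^s\theta_r\,dB_r\,dY_s$ into increment products, rewrite the $Y$-increments as Skorohod integrals $\delta_Y$ plus trace terms in $D^Y$ (Lemma~3.1), and estimate these by the $L^p$-continuity of $\delta_Y$ (Lemma~3.2) and the key partial-Malliavin bound (Lemma~3.9). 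Your adapted weight $\Phi_t$ would, combined with the interchange lemma above and the full-$B$ duality $E^{\W_B}[F\int_{\eta(t)}^t\theta_s\,dB_s]=E^{\W_B}[\int_{\eta(t)}^t D_s^BF\cdot\theta_s\,ds]$ (which needs no adaptedness of $D^BF$, since all of $B$ is integrated out), actually make the main term tractable and arguably simpler than $E_1$ in the paper — but you leave precisely this organization, together with the pairing of the $Y$-integrals inside $D_s^B\Phi_t$ against the outer $dY^j_t$, as ``the main obstacle'' without resolving it. Two smaller repairable points: the chain rule $D^B_s g(X_T)=\nabla g(X_T)D^B_sX_T$ is not available for merely Lipschitz $g$ without a density for $X_T$ (no ellipticity is assumed); one must invoke the Lipschitz version $D(g(X_T))=G\,D X_T$ with a bounded random $G$ (Nualart, Proposition~1.2.4), whereas the paper avoids differentiating $g$ altogether via Clark--Ocone and an approximation $Z_\ell\to g(X_T)$ in $L^{2p}$ — which is also what lets it treat merely measurable $g$. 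As written, then, your proposal is a plausible and partially novel strategy, but both load-bearing steps are deferred rather than proved.
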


\begin{remark}
The assumption $\|h\|_\infty < \infty$ can be weakened (see \cite{P84}, \cite{C11}). 
For example, Picard (\cite{P84}) discusses the condition 
\[
E\Big[ \exp\Big( (1+\varepsilon) T H\Big( \sup_{0 \leq t \leq T} |X_t| \Big) \Big)\Big] < \infty, \ \mbox{ for some } \varepsilon >0 
\]
where
\[
H(y) := \sup\Big\{ \sum_{j=1}^d (h^j)^2(x) ; |x| \leq y \Big\}.
\]
%In the present paper, 
%we are mainly interested in the regularity of $g$, 
%and will deal with the case where $\Phi_T$ and $\tilde{\Phi}_T$ have 
%finite $L^p$-moments for sufficiently large $p$.
\end{remark}

The convergence error (\ref{eq:picard}) is related to both of 
weak convergence of $\F_T^B$- measurable random variables 
and strong convergence of $\F_T^Y$-measurable random variables. 
Very roughly speaking, the order of convergence of the error is mainly from 
\[
\int_0^T (h(X_s)-h(X_{\eta(s)}))dY_s.
\] 
We notice that the difference $h(X_s)-h(X_{\eta(s)})$ has the weak error of $O(1/n)$, 
but this is averaged over the trajectory of $(Y_s)$. 
That is why the rate of convergence is not so obvious.
The proof given by Picard is quite complicated since we have to deal carefully with $\int_0^T \cdot \ dY_s$ 
under the conditional expectation $E[\ \cdot \ |\F_T^Y]$.
In this work, we generalize the result (\ref{eq:picard}) 
in terms of the regularity of $g$ (without any ellipticity condition) 
and $L^p$-estimates with $p>2$ 
using several techniques in Malliavin calculus, 
and however, $h$ is basically assumed to be bounded because of the difficulty 
in $L^p$-moment estimates for $\Phi_T$ and $\tilde{\Phi}_T$. 
See the main result in Theorem \ref{thm:main} and its proof. 

We review here numerical methods required for the simulation of Picard's filter 
$
E[g(X_T)\tilde{\Phi}_T|\F_T^Y].
$ 
Except in some specific situations the closed-form distribution of $X_t$ is not available, and therefore we need 
some time discretization schemes applied to $X_t$. 
Let $\tilde{X}$ be a time discretization scheme for $X$, 
such as the Euler-Maruyama approximation or the stochastic ODE approximations via cubature formulas on Wiener space 
(\cite{LV04}, \cite{NV08}). 
Then we have to analyze the error 
\begin{equation}\label{eq:secondstep}
\Big\| E[g(X_T)\tilde{\Phi}_T(X) |\F_T^Y] - E[g(\tilde{X}_T)\tilde{\Phi}_T(\tilde{X}) |\F_T^Y]  \Big\|_p
\end{equation}
and this type of problem is discussed in e.g.\ \cite{CG07}, \cite{CO}. 
In the case where $\tilde{X}$ is Euler-Maruyama scheme, 
several researchers give error estimates for (\ref{eq:picard}) and (\ref{eq:secondstep}) simultaneously (e.g.\ \cite{Ta86}, \cite{MT09}). 
We additionally have to discuss the simulation of $E[g(\tilde{X}_T)\tilde{\Phi}_T(\tilde{X})|\F_T^Y]$ 
via the Monte Carlo method. 
In practice, the procedure of estimation for this is performed step-by-step for each 
observation time $T = t_1, \cdots, t_n, \cdots$. 
Hence it is important to construct special simulation methods, recursively in time $T$, 
to avoid the recalculation of the conditional expectation and explosion of time series data. 
For the reason, particle filters (or sequencial Monte Carlo methods) were originally developed 
by \cite{Gor93} and \cite{Ki93} for discrete time filtering. 
Recent developments of particle filters can be found in \cite{DJ11} and references therein. 

Another approach to the computational problem for (\ref{eq:target}) is known as 
the stochastic partial differential equation (SPDE) approach.
We can derive the equation of the dynamics of $t \mapsto E[g(X_t)\Phi_t|\F_t^Y]$ ($g \in C^2$) 
which is called the Zakai equation (cf.\ \cite{BC09}, \cite{K11}). 
The Zakai equation follows a SPDE with the finite dimensional noise $Y$. 
In that case, we have to consider time discretizations for the SPDE and 
give some error estimates for strong convergence (see  e.g.\ \cite{G06}). 
We point out the relationship between the Zakai equation 
and Picard's filter $E[g(X_t)\tilde{\Phi}_t|\F_t^Y]$ in Remark \ref{rem:zakai}. 

This paper is organized as follows. 
In Section 2, we state the main result 
which is an extension of Picard's theorem, and shall give only the outline of the proof. 
In Section 3, we show the main part of the proof 
using infinite dimensional analysis on Wiener space, 
and in Section 4 we give some remarks on this research. 

%%%%%%%%%%%%%%%%%%%%%%%%%%%%%%%%%%%%%%%%%%%%%%%%%%%%%%%%%%%%%%%%%%%%%%%%%%%%%%%%%%%%%
\section{The Main result}
\subsection{An extension of Picard's theorem}
Let us fix $T > 0$. Throughout the paper, the condition 
\begin{equation}\label{eq:martingale_property}
E^P[\Phi_T^{-1}] = 1
\end{equation}
is always assumed to define the probability measure $Q$ on $\F_T$, 
i.e. $Q(A) := E^P[1_A \Phi_T^{-1}]$ for $A \in \F_T$. 
The assumptions \rm{(A2)}-\rm{(A3)} introduced below 
imply the condition (\ref{eq:martingale_property}). 
See Kallianpur \cite{K80}, Section 11.3. 

We shall extend Picard's theorem as follows. 
\begin{theorem}\label{thm:main}
Assume that the following conditions hold: 
\begin{itemize}
\item[\rm{(A1)}] The function $g: \R^N \rightarrow \R$ is a measurable function 
such that $g(X_T) \in \cap_{p \geq 1}L^p(\Omega, \F_T, Q)$. 
\item[\rm{(A2)}] The coefficients $b$ and $\sigma$ are Lipshitz continuous.
\item[\rm{(A3)}] The function $h:\R^N \rightarrow \R^d$ is a $C^2$-function 
of polynomial growth with all derivatives.
\item[\rm{(A4)}] For every $p \geq 1$, 
\[
\|\Phi_T\|_p + \sup_{n}\|\tilde{\Phi}_T\|_p \leq K(p,T) < \infty
.\]
\end{itemize}
Then for every $p \geq 1$, there exists a constant $C = C(p,T) > 0$ such that 
\begin{equation}\label{eq:main}
\Big\| E[g(X_T)\Phi_T|\F_T^Y] 
- E[g(X_T)\tilde{\Phi}_T|\F_T^Y]  \Big\|_p \leq \frac{C}{n}.
\end{equation}
\end{theorem}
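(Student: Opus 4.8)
The plan is to combine a duality argument for the $L^p$-norm of a conditional expectation with a \emph{two-directional} partial Malliavin calculus: the $Y$-direction is used to remove the stochastic integral against $dY$, and the $B$-direction is used to extract the weak order of convergence, the whole point being that the merely integrable variable $g(X_T)$ is never differentiated. First I would linearise the difference of the two exponentials. Writing $A_T$ and $\tilde A_T$ for the exponents of $\Phi_T$ and $\tilde\Phi_T$ and $G_\theta:=\Phi_T^\theta\tilde\Phi_T^{1-\theta}$, the elementary identity
\[
\Phi_T-\tilde\Phi_T=(A_T-\tilde A_T)\int_0^1 G_\theta\,d\theta
\]
isolates the increment
\[
A_T-\tilde A_T=\sum_{j=1}^d\int_0^T\big(h^j(X_s)-h^j(X_{\eta(s)})\big)dY^j_s-\tfrac12\sum_{j=1}^d\int_0^T\big((h^j)^2(X_s)-(h^j)^2(X_{\eta(s)})\big)ds.
\]
Since the left-hand side of (\ref{eq:main}) is the $L^p$-norm of an $\F_T^Y$-measurable variable, duality gives $\big\|E[g(X_T)(\Phi_T-\tilde\Phi_T)\mid\F_T^Y]\big\|_p=\sup_{\zeta}E[\zeta\,g(X_T)(\Phi_T-\tilde\Phi_T)]$, the supremum running over $\F_T^Y$-measurable $\zeta$ with $\|\zeta\|_q\le1$, $1/p+1/q=1$ (by density one may take $\zeta$ smooth in the $Y$-direction). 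This converts the problem into an unconditional expectation on the product Wiener space of $(B,Y)$.

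Next I would separate the drift and martingale parts of the increments. By It\^o's formula $h^j(X_s)-h^j(X_{\eta(s)})=\int_{\eta(s)}^s\gene h^j(X_r)\,dr+\int_{\eta(s)}^s(\nabla h^j\,\sigma)(X_r)\,dB_r=:D^j_s+M^j_s$, with $D^j_s=O(1/n)$ and $M^j_s=O(n^{-1/2})$ in every $L^p$, and similarly for the $ds$-correction. Using (A3)--(A4) and H\"older, every contribution carrying the drift factor $D_s$ is bounded directly by $C/n$. The whole difficulty is therefore concentrated in the martingale contributions, the dominant one being $E\big[\zeta\,g(X_T)(\int_0^1 G_\theta\,d\theta)\,\delta^Y(M)\big]$, where $M_s=\int_{\eta(s)}^s\nabla h\,\sigma(X_r)\,dB_r$ is $Y$-independent and $\delta^Y$ denotes the divergence in the $Y$-direction (here equal to the It\^o integral $\int_0^T M_s\,dY_s$).

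Here the two-directional calculus enters. I would first integrate by parts in the $Y$-direction: since $g(X_T)$ is $\F_T^B$-measurable we have $D^Y g(X_T)=0$, and the $Y$-derivatives of the densities are explicit, $D^Y_s G_\theta=G_\theta\,(\theta h(X_s)+(1-\theta)h(X_{\eta(s)}))$, so that
\[
E\big[\zeta g(X_T)(\textstyle\int_0^1 G_\theta\,d\theta)\,\delta^Y(M)\big]=E\Big[g(X_T)\!\int_0^T D^Y_s\big(\zeta\!\int_0^1 G_\theta\,d\theta\big)\cdot M_s\,ds\Big].
\]
The $dY$-integral has thus been eliminated without ever touching $g(X_T)$, leaving a genuine Riemann integral against the martingale increments $M_s$. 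The remaining task is to show that integrals of the form $\int_0^T E[\Gamma_s\,M_s]\,ds$, with $\Gamma_s$ a product of $g(X_T)$ and Malliavin-smooth factors of good moments, are $O(1/n)$ and not merely $O(n^{-1/2})$. For this I would pass to the $B$-direction, but by \emph{martingale orthogonality} rather than by differentiating $g$. Because $B$ remains a Brownian motion for the filtration $\F^B_t\vee\F^Y_T$ (independence of $B$ and $Y$ under $Q$), the increment $M_s$ has vanishing conditional mean given $\F^B_{\eta(s)}\vee\F^Y_T$; subtracting this mean and invoking the It\^o martingale representation of the $L^2$-functional $\Gamma_s$ (which needs only $\Gamma_s\in L^2$, hence only the integrability of $g(X_T)$, and neither smoothness nor ellipticity) leaves the $B$-energy of $\Gamma_s$ accumulated over the single mesh interval $[\eta(s),s]$. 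A Fubini exchange of the $ds$-integration with this energy, using that each instant lies in only one window of length $T/n$, upgrades the two available factors $n^{-1/2}$ into the full rate $1/n$; the bounds (A4) and the polynomial growth (A3) control $G_\theta$ and its derivatives uniformly in $\theta\in[0,1]$ and $n$.

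I expect the main obstacle to be precisely this gain of the extra half power of $n$ for the martingale term. The test-function derivative $D^Y_s\zeta$ produced by the $Y$-integration by parts is not bounded, so a crude re-integration by parts only returns $O(n^{-1/2})$, while the low regularity of $g$ forbids the standard device of differentiating $g(X_T)$ and inverting a Malliavin covariance matrix, which would in any case require the excluded ellipticity. Reconciling these constraints---handling the unbounded $D^Y\zeta$ while keeping $g(X_T)$ undifferentiated in \emph{both} directions and still extracting weak order one---is what forces the Hilbert-space-valued integration-by-parts analysis of Section~3, and it is there that I expect the real work to lie.
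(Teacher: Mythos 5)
Your architecture agrees with the paper's for three of the four error terms: the linearization $\Phi_T-\tilde{\Phi}_T=\Gamma_T(A_T-\tilde{A}_T)$ with $\Gamma_T=\int_0^1\Gamma_T(\rho)\,d\rho$ (your $G_\theta$), the It\^o splitting of $h^j(X_s)-h^j(X_{\eta(s)})$ into drift and martingale increments, the direct H\"older/BDG bounds on the drift pieces (the paper's $E_2$, $E_4$ in Proposition~\ref{prop:E_24}), and --- crucially --- your ``martingale orthogonality'' device for terms of the form $\int_0^T E^{\W_B}[\Gamma\,M_s]\,ds$ is exactly the paper's Lemmas~\ref{lem:infi_Ito_rep} and~\ref{lem:crucial}: the representation $F=E^{\W_B}[F]+\int_0^T f_s\,dB_s$ with $f$ adapted to the enlarged filtration $\F_s^B\vee\F_T^Y$, together with $E[(\int_0^T|f_s|^2ds)^{p/2}]\le C\,E[|F|^p]$, which indeed uses only the integrability of $g(X_T)$ and settles the $ds$-integrated martingale term ($E_3$, Proposition~\ref{prop:E_3}) at rate $1/n$. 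Up to that point your proposal is sound and coincides with the paper's route.

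The genuine gap is in the dominant term $E_1$, and you concede it yourself. Having dualized against $\zeta\in L^q(\F_T^Y)$ and integrated by parts in $Y$, you are left with $E\bigl[g(X_T)\Gamma\int_0^T D_s^Y\zeta\cdot M_s\,ds\bigr]$, where duality provides no Sobolev control on $\zeta$; you then defer ``the real work'' to an unspecified Hilbert-space-valued analysis --- but that analysis \emph{is} the proof, and the proposal does not contain it. Note first that the duality detour undoes itself: conditioning on $B$ gives $E^{\W_Y}\bigl[\int_0^T D_s^Y\zeta\,\psi_s\,ds\bigr]=E^{\W_Y}[\zeta\,\delta_Y(\psi)]$ by the adjoint property, so one is back to estimating an explicit $\F_T^Y$-measurable functional, which is how the paper proceeds from the outset via $E[\,\cdot\,|\F_T^Y]=E^{\W_B}[\,\cdot\,]$. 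The missing mechanisms are then two. First, It\^o's product formula per mesh interval rewrites $\int_0^T\int_{\eta(s)}^s\theta_r\,dB_r\,dY_s$ as $\sum_i\bigl((\int_{t_i}^{t_{i+1}}\theta\,dB)(Y_{t_{i+1}}-Y_{t_i})-\int_{t_i}^{t_{i+1}}(Y_s-Y_{t_i})\theta_s\,dB_s\bigr)$; after the $B$-duality the coefficients $E^{\W_B}[f_s\theta_s]$ \emph{anticipate} $Y$, so their products with the increments must be converted by Lemma~\ref{lem:comp_dual} (and the Fubini Lemma~\ref{lem:fubini}) into Skorohod integrals $\delta_Y$ whose integrands are $ds$-averages supported on single mesh intervals, whereupon the $\DD^{1,p}$-continuity of $\delta_Y$ (Lemma~\ref{lem:conti_delta}) turns the small support into the factor $1/n$ in $L^p$, $p>2$. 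Second, the trace terms $E^{\W_B}[(D_r^Yf_s)\theta_s]$ require the key partial-Malliavin estimate of Lemma~\ref{lem:key}: with $f_s=J_s^B(Z\Gamma_T(\rho))$ one has $D_r^YJ_s^B(Z\Gamma_T(\rho))=E^{\W_B}[D_s^B(Z\,D_r^Y\Gamma_T(\rho))|\F_s^B]$ and $D_r^Y\Gamma_T(\rho)=\Gamma_T(\rho)(\rho h(X_r)+(1-\rho)h(X_{\eta(r)}))$, so the $B$-Leibniz rule puts derivatives only on $h(X)$, never on $Z$, and the bound (\ref{eq:keyestimate}) closes in terms of $\|Z\Gamma_T(\rho)\|_p$ alone --- which is precisely what legitimizes the approximation $Z_\ell\to g(X_T)$ in $L^{2p}$ for merely measurable $g$. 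Without these two steps your estimate of $E_1$ stalls at $O(n^{-1/2})$, so the argument is incomplete exactly at the point the paper identifies as the most difficult.
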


A typical example of \rm{(A4)} is that $h$ is bounded. 
The following corollary for the convergence of the normalized conditional expectation  
is an immediate consequence of Theorem \ref{thm:main}.
\begin{corollary}
Suppose the assumptions \rm{(A1)}-\rm{(A3)} hold, and moreover $h$ is assumed to be bounded. 
Then for every $p \geq 1$, there exists a constant $C = C(p,T) > 0$ such that 
\begin{equation*}
E^P\Big[ \Big|E^P[g(X_T)|\F_T^Y]
- \frac{E[g(X_T)\tilde{\Phi}_T|\F_T^Y]}{E[\tilde{\Phi}_T|\F_T^Y]}\Big|^p \Big]^{1/p} \leq \frac{C}{n}.
\end{equation*}
\end{corollary}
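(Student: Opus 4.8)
The plan is to deduce the corollary from Theorem~\ref{thm:main} via the Kallianpur--Striebel formula, the only genuinely new ingredient being a uniform (in $n$) control of the inverse moments of the approximate normalizing constant. Write
\[
N := E[g(X_T)\Phi_T|\F_T^Y], \quad \tilde{N} := E[g(X_T)\tilde{\Phi}_T|\F_T^Y], \quad D := E[\Phi_T|\F_T^Y], \quad \tilde{D} := E[\tilde{\Phi}_T|\F_T^Y],
\]
so that, by Kallianpur--Striebel, $N/D = E^P[g(X_T)|\F_T^Y]$ and the quantity to be estimated is $N/D - \tilde{N}/\tilde{D}$. I would begin from the algebraic identity
\[
\frac{N}{D} - \frac{\tilde{N}}{\tilde{D}} = E^P[g(X_T)|\F_T^Y]\,\frac{\tilde{D}-D}{\tilde{D}} + \frac{N-\tilde{N}}{\tilde{D}},
\]
which isolates the two error terms $\tilde{D}-D$ and $N-\tilde{N}$ and, crucially, features only the single reciprocal $1/\tilde{D}$ (no $1/D$).

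Theorem~\ref{thm:main} applies here because boundedness of $h$ yields assumption~\rm{(A4)} (indeed conditioning on the signal path gives $E[\tilde{\Phi}_T^p|\F_T^X]=\exp(\tfrac{p^2-p}{2}\int_0^T|h(X_{\eta(s)})|^2\,ds)$, bounded uniformly in $n$, where $\F_T^X:=\sigma(X_s:0\le s\le T)$). Applying the theorem to $g$ and to the constant function $1$ gives $\|N-\tilde{N}\|_p + \|\tilde{D}-D\|_p \le C/n$ in the $L^p(Q)$-norm for every $p$. Since these are $\F_T^Y$-measurable, I would transfer them to $L^p(P)$ through $dP/dQ=\Phi_T$: for any $\F_T^Y$-measurable $Z$ one has $E^P[|Z|^p]=E[|Z|^p D]$, and Hölder's inequality under $Q$ together with \rm{(A4)} converts the $L^p(Q)$-bound into an $L^p(P)$-bound of the same order $1/n$. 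The factor $E^P[g(X_T)|\F_T^Y]$ is bounded in every $L^p(P)$ by conditional Jensen, since $g(X_T)\in\cap_p L^p(P)$ follows from \rm{(A1)} and \rm{(A4)}.

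The main obstacle is to show that $1/\tilde{D}$ is bounded in every $L^q(P)$ uniformly in $n$; this is exactly the step that needs $h$ bounded. I would first use conditional Jensen with the convex function $x\mapsto x^{-s}$ to get $\tilde{D}^{-s}\le E[\tilde{\Phi}_T^{-s}|\F_T^Y]$, reducing matters to $\sup_n E[\tilde{\Phi}_T^{-s}]<\infty$ for every $s>0$. Conditioning on $\F_T^X$, under $Q$ the random variable $\sum_{j}\int_0^T h^j(X_{\eta(s)})\,dY^j_s$ is centered Gaussian with variance $\int_0^T|h(X_{\eta(s)})|^2\,ds\le\|h\|_\infty^2 T$, so the log-normal moment formula yields
\[
E[\tilde{\Phi}_T^{-s}|\F_T^X]=\exp\Big(\tfrac{s^2+s}{2}\int_0^T|h(X_{\eta(s)})|^2\,ds\Big)\le\exp\Big(\tfrac{(s^2+s)\|h\|_\infty^2 T}{2}\Big),
\]
a constant independent of $n$. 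Taking expectations gives the required uniform inverse-moment bound, and the same conversion $E^P[\,\cdot\,]=E[\,\cdot\,D]$ plus Hölder and \rm{(A4)} transfers it from $L^q(Q)$ to $L^q(P)$.

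Finally I would combine everything by Hölder's inequality in $L^p(P)$: each of the two terms in the identity above is a product of an error factor of order $1/n$ and factors bounded in all $L^q(P)$ uniformly in $n$, whence $E^P[|N/D-\tilde{N}/\tilde{D}|^p]^{1/p}\le C/n$, which is the assertion. I expect no serious difficulty beyond the inverse-moment estimate; the bookkeeping of Hölder exponents and the $Q$-to-$P$ transfer are routine.
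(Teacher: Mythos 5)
Your proposal is correct and follows essentially the same route as the paper: the same Kallianpur--Striebel decomposition of the normalized difference (yours is a minor algebraic variant that needs only $1/\tilde{\rho}_T(\mathbf{1})$ rather than both reciprocals), Theorem~\ref{thm:main} applied to $g$ and $\mathbf{1}$, uniform moment and inverse-moment bounds for the normalizing constants, and the change of measure $E^P[\,\cdot\,]=E[\,\cdot\,\Phi_T]$ combined with H\"older. In fact you supply a detail the paper only asserts, namely the uniform-in-$n$ bound $\sup_n E[\tilde{\Phi}_T^{-s}]<\infty$ via conditioning on $\F_T^X$ and the log-normal moment formula together with conditional Jensen for $x\mapsto x^{-s}$, and this computation is correct.
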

\begin{proof}
Let $\rho_T(g) := E[g(X_T)\Phi_T|\F_T^Y]$ and $\tilde{\rho}_T(g) := E[g(X_T)\tilde{\Phi}_T|\F_T^Y]$. 
The error is expressed as 
\begin{align*}
\frac{\rho_T(g)}{\rho_T({\bf 1})} - \frac{\tilde{\rho}_T(g)}{\tilde{\rho}_T({\bf 1})} = 
\frac{\rho_T(g) - \tilde{\rho}_T(g)}{\rho_T({\bf 1})} + \frac{\tilde{\rho}_T(g)}{\rho_T({\bf 1})\tilde{\rho}_T({\bf 1})}(\tilde{\rho}_T({\bf 1}) - \rho_T({\bf 1})).
\end{align*}
It is possible to show from the boundedness of $h$ that the $L^p(\Omega, \F_T, Q)$-norms of 
$\Phi_T$, $\tilde{\Phi}_T$, $\rho_T({\bf 1})^{-1}$ 
and $\tilde{\rho}_T({\bf 1})^{-1}$ are bounded for every $p\geq 1$. 
Hence we obtain from Cauchy-Schwarz's inequality 
\begin{align*}
E^P\Big[\Big|\frac{\rho_T(g)}{\rho_T({\bf 1})} - \frac{\tilde{\rho}_T(g)}{\tilde{\rho}_T({\bf 1})}\Big|^p\Big]^{1/p}
& =  E\Big[ \Big| \frac{\rho_T(g)}{\rho_T({\bf 1})} - \frac{\tilde{\rho}_T(g)}{\tilde{\rho}_T({\bf 1})}\Big|^p \Phi_T \Big]^{1/p}
\\ & \leq C_1(p,T) \| \rho_T(g) - \tilde{\rho}_T(g) \|_{2p} + C_2(p,T) \| \rho_T({\bf 1}) - \tilde{\rho}_T({\bf 1}) \|_{2p}, 
\end{align*}
which proves the desired result. 
\end{proof}

\begin{remark}
For the proof of Theorem \ref{thm:main}, 
the probability space $(\Omega, \F_T, Q)$ can be replaced by any other probability space 
on which $(X_t,Y_t)_{0\leq t \leq T}$ has the same law. 
In the following, we fix the probability space  
so that $(B_t)_{0\leq t \leq T}$ and $(Y)_{0\leq t \leq T}$ are independent Brownian motions, and $(X_t)_{0\leq t \leq T}$ is the solution of (\ref{eq:sde}). 
The probability space will be assumed to be the Wiener space in Section \ref{sec:3}. 
\end{remark}

\begin{remark}\label{rem:zakai}
As mentioned in the introduction, the time evolution $\rho_t(g): t \mapsto E[g(X_t)\Phi_t|\F_t^Y], (g \in C_b^2)$ solves the Zakai equation 
\[
\rho_t(g) = \rho_0(g) + \int_0^t \rho_s(\gene g) ds + \int_0^t \rho_s( g h^\mathsf{T}) dY_s
\]
where $\rho_0(g) = E[g(X_0)] = g(x)$ and $\gene$ is the generator of $X$, i.e. 
\[
(\gene g)(x) = \sum_{i=1}^N b^i(x)\frac{\partial g}{\partial x_i}(x)
+ \frac{1}{2} \sum_{i, j=1}^N (\sigma^i\sigma^j)(x)\frac{\partial^2 g}{\partial x_i \partial x_j}(x).
\]
Picard's filter $\tilde{\rho}_t(g) : t \mapsto E[g(X_t)\tilde{\Phi}_T | \F_t^Y]$ can be understood as 
a semigroup-type approximation (or Markov chain approximation) in the following sense. 
Let $X_t^x$ be a stochastic flow of the SDE (\ref{eq:sde}) and $(P_tg)(x) := E[g(X_t^x)]$. 
Define a parameterized operator $\tilde{P}_t^y$, $y \in \R^d$ by 
\[
(\tilde{P}_t^y g)(x) := (P_tg)(x) 
\exp\Big(\sum_{j=1}^d \Big(h^j(x)y^{j} 
- \frac{1}{2}(h^j)^2(x)t \Big)\Big).
\]
Then we can deduce that for $t_i \leq t < t_{i+1}$, 
\[
\tilde{\rho}_t(g) = \tilde{P}_{t_1-t_0}^{Y_{t_1}-Y_{t_0}} 
\circ \cdots \circ \tilde{P}_{t_{i}-t_{i-1}}^{Y_{t_i}-Y_{t_{i-1}}} \circ \tilde{P}_{t-t_i}^{Y_{t}-Y_{t_i}}(g),  
\]
and $\tilde{P}_{t-t_i}^{Y_{t}-Y_{t_i}}(g)(x)$ is a solution of the evolution equation 
\[
\tilde{P}_{t-t_i}^{Y_{t}-Y_{t_i}}(g) = 
g(x) + \int_{t_i}^t \tilde{P}_{s-t_i}^{Y_{s}-Y_{t_i}}(\gene g) ds 
+ \int_{t_i}^t \tilde{P}_{s-t_i}^{Y_{s}-Y_{t_i}}( g ) h^\mathsf{T}(x) dY_s, 
\]
which can be considered as the Zakai equation with the freezing coefficient $h(x)$. 
\end{remark}

\subsection{Outline of proof}
The proof of Theorem \ref{thm:main} is entirely different from the original one in \cite{P84}. 
Let us compute 
\begin{align*}
& g(X_T)\Phi_T- g(X_T)\tilde{\Phi}_T
\\ &= g(X_T)\Gamma_T 
\sum_{j=1}^d \Big(\int_0^T (h^j(X_s)-h^j(X_{\eta(s)}))dY^j_s -\frac{1}{2}\int_0^T ((h^j)^2(X_s) - (h^j)^2(X_{\eta(s)}))ds \Big)
\end{align*}
where 
\begin{align*}
\Gamma_T &= \int_0^1 \Gamma_T(\rho) d\rho, 
\\ \Gamma_T(\rho) &= \exp(\rho \log(\Phi_T) + (1-\rho)\log(\tilde{\Phi}_T)). 
\end{align*}
Applying It\^o's formula to $\zeta(X_s)$ with $\zeta = h^j$ or $(h^j)^2 \in C^2$, we have
\begin{align*}
\zeta(X_s)-\zeta(X_{\eta(s)}) = \int_{\eta(s)}^s \nabla\zeta(X_r)\sigma(X_r)dB_r + \int_{\eta(s)}^s (\gene \zeta)(X_r)dr.
\end{align*}
So the error $E[g(X_T)\Phi_T|\F_T^Y] - E[g(X_T)\tilde{\Phi}_T|\F_T^Y]$ 
can be decomposed into four parts $(E_i)_{1\leq i\leq 4}$: 
\begin{align*}
E_1 &= E\Big[g(X_T)\Gamma_T \sum_{j=1}^d\int_0^T \Big(\int_{\eta(s)}^s \nabla(h^j)(X_r)\sigma(X_r)dB_r\Big) dY^j_s \Big|\F_T^Y\Big] 
\\ E_2 &= E\Big[g(X_T)\Gamma_T \sum_{j=1}^d\int_0^T \Big(\int_{\eta(s)}^s \gene h^j(X_r)dr\Big) dY^j_s \Big|\F_T^Y\Big]
\\ E_3 &= - \frac{1}{2} E\Big[g(X_T)\Gamma_T \sum_{j=1}^d\int_0^T \Big(\int_{\eta(s)}^s \nabla((h^j)^2)(X_r)\sigma(X_r)dB_r\Big) ds \Big|\F_T^Y\Big]
\\ E_4 &= - \frac{1}{2} E\Big[g(X_T)\Gamma_T \sum_{j=1}^d\int_0^T \Big(\int_{\eta(s)}^s \gene (h^j)^2(X_r)dr\Big) ds \Big|\F_T^Y\Big].
\end{align*}
We are going to prove that 
\[
\|E_i\|_p \leq \frac{C(i,p,T)}{n}
\]
for $p \geq 2$ and $1 \leq i \leq 4$. 
The estimation for $E_1$ is the most difficult task since $E_1$ includes both $dB$ and $dY$ parts. 
First, we give the estimates for $E_2$ and $E_4$.

\begin{proposition}\label{prop:E_24}
Under the assumption \rm{(A1)}-\rm{(A4)}, for every $p \geq 1$, 
there exists a constant $C = C(p,T) > 0$ such that 
\[
\|E_2\|_p + \|E_4\|_p \leq \frac{C}{n}.
\]
\end{proposition}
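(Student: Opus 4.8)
The plan is to exploit the structural difference between $E_2,E_4$ and the hard term $E_1$: in $E_2$ and $E_4$ the inner layer is the time integral $\int_{\eta(s)}^s(\cdots)\,dr$ rather than a stochastic integral in $dB$. Since $s-\eta(s)\le T/n$, this inner integral is pathwise of order $1/n$, so the rate can be extracted directly, with no need for a cancellation or integration-by-parts argument. The common scheme I would use for both terms is: (i) drop the conditioning by the conditional Jensen inequality $\|E[Z\mid\F_T^Y]\|_p\le\|Z\|_p$; (ii) factor $Z$ by the generalized H\"older inequality into $g(X_T)$, $\Gamma_T$, and the remaining integral, placing each in $L^{3p}$; (iii) bound the three factors separately.

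For $E_4$ I would bound the inner integral pathwise,
\[
\Big|\int_{\eta(s)}^s \gene(h^j)^2(X_r)\,dr\Big|\le \frac{T}{n}\sup_{0\le r\le T}\big|\gene(h^j)^2(X_r)\big|,
\]
so that the full double integral is at most $\tfrac{T^2}{n}\sup_r|\gene(h^j)^2(X_r)|$; H\"older then gives
\[
\|E_4\|_p\le \frac{T^2}{2n}\sum_{j=1}^d\|g(X_T)\|_{3p}\,\|\Gamma_T\|_{3p}\,\big\|\sup_{0\le r\le T}|\gene(h^j)^2(X_r)|\big\|_{3p}.
\]
For $E_2$ the outer integral is stochastic, so after (i)--(ii) I would estimate the factor $\int_0^T a_j(s)\,dY^j_s$, with $a_j(s):=\int_{\eta(s)}^s\gene h^j(X_r)\,dr$, by the Burkholder--Davis--Gundy inequality. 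Since $a_j$ is $\F_s$-adapted and $|a_j(s)|\le \tfrac{T}{n}\sup_r|\gene h^j(X_r)|$, BDG yields
\[
\Big\|\int_0^T a_j(s)\,dY^j_s\Big\|_{3p}\le C_{3p}\Big\|\Big(\int_0^T a_j(s)^2\,ds\Big)^{1/2}\Big\|_{3p}\le C_{3p}\,\frac{T^{3/2}}{n}\,\big\|\sup_{0\le r\le T}|\gene h^j(X_r)|\big\|_{3p},
\]
again of order $1/n$.

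It remains to see that every norm above is finite and, for the $\tilde{\Phi}_T$-dependent ones, uniform in $n$. The factor $\|g(X_T)\|_{3p}$ is finite by (A1); writing $\Gamma_T=\int_0^1\Phi_T^{\rho}\tilde{\Phi}_T^{1-\rho}\,d\rho$ and applying H\"older once more gives $\|\Gamma_T\|_{3p}\le\sup_{0\le\rho\le1}\|\Phi_T\|_{3p}^{\rho}\|\tilde{\Phi}_T\|_{3p}^{1-\rho}$, which is bounded uniformly in $n$ by (A4). Finally, because $b,\sigma$ are Lipschitz (hence of linear growth) by (A2) and $h\in C^2$ is of polynomial growth together with all its derivatives by (A3), both $\gene h^j$ and $\gene(h^j)^2$ are of polynomial growth; combined with the standard moment bound $\|\sup_{0\le r\le T}|X_r|\|_q<\infty$ for the solution of (\ref{eq:sde}), this makes $\|\sup_r|\gene h^j(X_r)|\|_{3p}$ and $\|\sup_r|\gene(h^j)^2(X_r)|\|_{3p}$ finite. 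The only mildly delicate points are the adaptedness required to justify BDG for the $dY$-integral under $Q$ and checking that differentiation preserves polynomial growth of $(h^j)^2$; neither is a genuine obstacle, which is precisely why $E_2$ and $E_4$ are the easy terms and the real difficulty is isolated in $E_1$.
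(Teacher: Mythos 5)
Your proposal is correct and follows essentially the same route as the paper's proof: drop the conditioning, factor out $g(X_T)\Gamma_T$ by H\"older/Cauchy--Schwarz, bound the inner $dr$-integral pathwise by $(T/n)\sup_r|\gene \zeta(X_r)|$, and handle the $dY$-integral in $E_2$ via the Burkholder--Davis--Gundy inequality. The only cosmetic differences are your three-way H\"older at exponent $3p$ versus the paper's two-way split at $2p$, and your interpolation bound $\|\Gamma_T(\rho)\|_{3p}\le\|\Phi_T\|_{3p}^{\rho}\|\tilde{\Phi}_T\|_{3p}^{1-\rho}$ where the paper uses the cruder pointwise estimate $\Gamma_T\le\Phi_T+\tilde{\Phi}_T$.
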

\begin{proof}
By the assumption \rm{(A4)}, it holds 
that 
\[
\|\Gamma_T\|_q \leq \|\Phi_T\|_q + \|\tilde{\Phi}_T\|_q \leq K(q,T) < \infty
\]
for every $q \geq 1$. Thus we have easily 
\begin{align*}
\|E_4\|_p &\leq \|g(X_T)\Gamma_T\|_{2p} 
E\Big[\Big| \sum_{j=1}^d\int_0^T \Big(\int_{\eta(s)}^s \gene (h^j)^2(X_r)dr\Big) ds \Big|^{2p}\Big]^{1/2p}
\\ &\leq \frac{C_1(p,T)}{n} \sum_{j=1}^d E\Big[\sup_{0\leq r \leq T}|\gene (h^j)^2(X_r)|^{2p}\Big]^{1/2p}.
\end{align*}
This gives the estimate $\|E_4\|_p \leq C/n$.

We next turn to prove $\|E_2\|_p \leq C/n$. Using the Cauchy-Schwarz inequality and Burkholder-Davis-Gundy inequality, we have 
\begin{align*}
\|E_2\|_p &\leq \|g(X_T)\Gamma_T\|_{2p} 
E\Big[\Big| \sum_{j=1}^d\int_0^T \Big(\int_{\eta(s)}^s \gene h^j(X_r)dr\Big) dY^j_s \Big|^{2p}\Big]^{1/2p}
\\ &\leq C_2(p,T) \sum_{j=1}^d E\Big[\Big( \int_0^T \Big(\int_{\eta(s)}^s \gene h^j(X_r)dr\Big)^2 ds \Big)^{p}\Big]^{1/2p}.
\end{align*}
We can finally get the estimate 
\begin{align*}
E\Big[\Big( \int_0^T \Big(\int_{\eta(s)}^s \gene h^j(X_r)dr\Big)^2 ds \Big)^{p}\Big]^{1/2p} 
& \leq E\Big[ \sup_{0\leq r \leq T} |(\gene h^j)(X_r)|^{2p} \Big(\int_0^T (s-\eta(s))^2ds \Big)^p \Big]^{1/2p}
\\ & \leq \frac{C_3(p,T)}{n}. 
\end{align*}
\end{proof}

%%%%%%%%%%%%%%%%%%%%%%%%%%%%%%%%%%%%%%%%%%%%%%%%%%%%%%%%%%%%%%%%%%%%%%%%%%%%%%%%%%%%%%%%%%%%%%%%%%%%%%%%%%%%%%%%%%%%%%%%%%%%%%%%%%%
\section{The estimation via infinite dimensional analysis}\label{sec:3}

This section is devoted to the estimates for $E_1$ and $E_3$ defined in previous. 
The Malliavin calculus for Hilbert space valued functionals plays an important role in the estimates. 

\subsection{A brief review of Malliavin calculus and Hilbert space valued martingales}
Let $(\Omega, \F, Q)$ be a $d$-dimensional Wiener space and 
$(B_t)_{0\leq t \leq T}$ be the $d$-dimensional canonical Brownian motion on $(\Omega, \F, Q)$. 
More precisely, $\Omega = C([0,T];\R^d)$, $\F$ is the Borel $\sigma$-field on $\Omega$, and 
$Q$ is the Wiener measure under which the coodinate map $t \mapsto B_t, B \in \Omega$ becomes 
a standard Brownian motion.  

The Malliavin derivative $D : L^2(\Omega) \supset \mathrm{Dom}(D) \rightarrow L^2(\Omega; L^2([0,T]; \R^d))$ 
is defined as the extension of the following closable operator for smooth Wiener functional $F$: 
\[
F = f\Big(\int_0^T h_1(s)dB_s, \dots, \int_0^T h_m(s)dB_s\Big)  
\]
where $f: \R^m \rightarrow \R$ is a polynomial function and $(h_i) \subset L^2([0,T];\R^d)$. Then define
\[
DF := \sum_{i=1}^m (\partial_i f)\Big(\int_0^T h_1(s)dB_s, \dots, \int_0^T h_m(s)dB_s\Big) h_i.
\]
The Skorohod integral $\delta : L^2(\Omega; L^2([0,T]; \R^d)) \supset \mathrm{Dom}(\delta) \rightarrow L^2(\Omega)$ 
is the adjoint operator of $D$. 
Let $K$ be a real separable Hilbert space. We can similarly define $D$ and $\delta$ for $K$-valued Wiener functionals. 
The spaces $\DD^{1,p}(K) \subset L^p(\Omega; K)$ are defined as the Sobolev spaces induced by 
the derivative operator $D$ for $K$-valued Wiener functionals. 
For the details of the precise formulation of Malliavin calculus, we refer to \cite{Shigekawa} and \cite{N09}. 

We prepare some results for the Skorohod integral $\delta$ (cf.\ \cite{N09}).
\begin{lemma}\label{lem:comp_dual}
For $u(\cdot) = \sum_{i=1}^n F_i 1_{[t_i, t_{i+1})}(\cdot) \in L^2([0,T]; \R^d)$ with $F_i \in \DD^{1,2}(\R^d)$, we have  
\[
\delta(u) = \sum_{i=1}^n F_i \cdot (B_{t_{i+1}}-B_{t_i}) - \sum_{i=1}^n \int_{t_i}^{t_{i+1}} \sum_{j=1}^dD_r^{(j)}F_i^{(j)} dr. 
\]
\end{lemma}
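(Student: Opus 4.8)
The plan is to verify the formula directly from the defining duality relation of the Skorohod integral, namely that for all $G \in \DD^{1,2}(\R)$ we have $E[G\,\delta(u)] = E[\langle DG, u\rangle_{L^2([0,T];\R^d)}]$. Since the asserted right-hand side and the true $\delta(u)$ are both in $L^2(\Omega)$, it suffices to test both against a total family of smooth functionals $G$ and check that the pairings agree. First I would fix the simple (elementary) process $u(\cdot)=\sum_{i=1}^n F_i 1_{[t_i,t_{i+1})}(\cdot)$ and observe, by linearity of $\delta$, that it is enough to treat a single term $F\,1_{[a,b)}(\cdot)$ with $F \in \DD^{1,2}(\R^d)$ and then sum over $i$.

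For such a single term, the candidate formula reads
\[
\delta\big(F\,1_{[a,b)}\big) = F\cdot(B_b - B_a) - \int_a^b \sum_{j=1}^d D_r^{(j)}F^{(j)}\,dr.
\]
The key computational step is to compute $E\big[\langle DG, F\,1_{[a,b)}\rangle\big]$ for a smooth scalar $G$ and rewrite it using the ordinary integration-by-parts (product) rule for the Malliavin derivative. Concretely, I would write
\[
\langle DG, F\,1_{[a,b)}\rangle = \sum_{j=1}^d \int_a^b D_r^{(j)}G \cdot F^{(j)}\,dr,
\]
and then use the Leibniz rule $D_r^{(j)}(GF^{(j)}) = (D_r^{(j)}G)F^{(j)} + G\,D_r^{(j)}F^{(j)}$ together with the scalar duality $E\big[\int_a^b D_r^{(j)}H\,dr\big] = E\big[H\cdot(B_b^{(j)}-B_a^{(j)})\big]$, applied with $H = GF^{(j)}$. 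Rearranging gives exactly $E[\langle DG, u\rangle] = E[G\cdot(\text{RHS})]$, which by the defining duality identifies the right-hand side with $\delta(u)$.

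The only genuine technical point, rather than a true obstacle, is justifying the manipulations at the correct level of integrability: one must ensure $F\cdot(B_b-B_a)$ and $\int_a^b D_r^{(j)}F^{(j)}\,dr$ lie in $L^2(\Omega)$ so that the stated closed form is a bona fide element of the domain of $\delta$, and that $u \in \mathrm{Dom}(\delta)$ in the first place. For $F \in \DD^{1,2}(\R^d)$ these follow from the Cauchy--Schwarz inequality and the $L^2$-bound on $\int_a^b |D_r F|^2\,dr$; I would verify these on smooth $F$ and pass to the limit using closability of $D$ and the closedness of $\delta$. Once integrability is in place, the duality argument closes the proof, and summing over the finitely many intervals $[t_i,t_{i+1})$ recovers the full statement by linearity.
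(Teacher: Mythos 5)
Your argument is correct, and it is worth noting at the outset that the paper itself offers no proof of Lemma \ref{lem:comp_dual}: it is stated as a preparation with a pointer to Nualart \cite{N09}. Your duality computation --- the Leibniz rule $D_r^{(j)}(GF^{(j)}) = (D_r^{(j)}G)F^{(j)} + G\,D_r^{(j)}F^{(j)}$ combined with the scalar integration by parts $E\big[\int_a^b D_r^{(j)}H\,dr\big] = E\big[H\,(B_b^{(j)}-B_a^{(j)})\big]$ applied to $H = GF^{(j)}$ --- is precisely the standard proof in that reference for smooth elementary processes, followed by the same closure argument, so you have in effect reconstructed the cited proof rather than found a new route.

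Two technical points deserve tightening. First, your integrability discussion is slightly glib: Cauchy--Schwarz alone gives only $F\cdot(B_b-B_a)\in L^1$, since $B_b-B_a$ is in general \emph{not} independent of $F$; the $L^2$ membership, and the $L^2$ convergence $F^k\cdot(B_b-B_a)\to F\cdot(B_b-B_a)$ that your appeal to closedness of $\delta$ requires, rest on the genuinely Malliavin-type estimate $\|F(B_b^{(j)}-B_a^{(j)})\|_2 \le C (b-a)^{1/2}\|F\|_{\DD^{1,2}}$. You can obtain this for smooth $F$ by expanding $E[F^2(B_b^{(j)}-B_a^{(j)})^2]$ with the same integration by parts and absorbing the cross term (or via the Wiener chaos product formula), then extend by closability; alternatively, identify the $L^2$-limit of $F^k\cdot(B_b-B_a)$ as $F\cdot(B_b-B_a)$ through the $L^1$ convergence that Cauchy--Schwarz does give. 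Relatedly, to invoke the product rule with $F$ merely in $\DD^{1,2}$ you should take the test functionals $G$ cylindrical with $f\in C_b^\infty$, so that $G$ and $DG$ are bounded; such $G$ remain total in $L^2(\Omega)$. Second, the domain question can be closed more directly than you propose: once the identity $E[\langle DG, u\rangle] = E[G\,R]$ (with $R$ the asserted right-hand side, already shown to lie in $L^2$) is extended from smooth $G$ to all $G\in\DD^{1,2}$ by density, the resulting bound $|E[\langle DG,u\rangle]| \le \|R\|_2\|G\|_2$ is exactly the definition of $u\in\mathrm{Dom}(\delta)$ with $\delta(u)=R$, so no separate approximation of $u$ is needed at that final stage.
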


\begin{lemma}[Continuity of $\delta$]\label{lem:conti_delta}
Let $p > 1$.
There exists $C  > 0$ such that 
\[
\|\delta(u)\|_p \leq C \|u\|_{\DD^{1,p}(L^2([0,T]; \R^d))}
\]
for every $u \in \DD^{1,p}(L^2([0,T]; \R^d))$
\end{lemma}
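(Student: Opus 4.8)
The plan is to reduce to a dense class, dispatch the exponent $p=2$ by hand, and invoke Meyer's inequalities for the remaining exponents. Write $H = L^2([0,T];\R^d)$ for brevity. Since the simple $H$-valued functionals of the type appearing in Lemma~\ref{lem:comp_dual} are dense in $\DD^{1,p}(H)$ and the operator $\delta$ is closed, it suffices to establish the bound $\|\delta(u)\|_p \leq C\|u\|_{\DD^{1,p}(H)}$ for such $u$ and then pass to the limit.

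For $p=2$ the estimate is elementary and self-contained. From the duality defining $\delta$ one obtains the isometry-type identity
\[
\|\delta(u)\|_2^2 = E[\|u\|_H^2] + E\Big[\int_0^T\!\!\int_0^T \sum_{j,k} D_s^{(k)}u_t^{(j)}\, D_t^{(j)}u_s^{(k)}\,ds\,dt\Big],
\]
and the second term is dominated by $\|Du\|_{L^2(\Omega;H\otimes H)}^2$ via the Cauchy-Schwarz inequality, so that $\|\delta(u)\|_2$ is controlled by the $\DD^{1,2}(H)$-norm of $u$ with a universal constant.

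For $p \neq 2$ I would pass through the Ornstein-Uhlenbeck structure. Let $L$ denote the number operator, acting as multiplication by $-n$ on the Wiener chaos of order $n$, so that $\delta D = -L$ and $\delta$ raises the chaos order by one. The decisive analytic input is Meyer's inequality, that is, the $L^p$-equivalence
\[
\|(-L)^{1/2}F\|_p \leq c_p\,\|DF\|_{L^p(\Omega;H)}, \qquad p>1,
\]
equivalently the $L^p$-boundedness for $p>1$ of the Riesz transform $D(-L)^{-1/2}$ and of its adjoint. Granting this, the order-zero Meyer inequality is obtained by organizing $\delta(u)$ according to the chaos decomposition of $u$ and applying the Meyer multiplier theorem, and it yields exactly the claimed bound $\|\delta(u)\|_p \leq C\|u\|_{\DD^{1,p}(H)}$.

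The main obstacle is Meyer's inequality itself for $p \neq 2$, i.e. the $L^p$-boundedness of the Riesz transform $D(-L)^{-1/2}$; this is genuinely harder than the $p=2$ case because the orthogonality of the chaoses is no longer available. The standard route combines the hypercontractivity of the Ornstein-Uhlenbeck semigroup $T_s = e^{sL}$ (Nelson's theorem) with Littlewood-Paley-Stein square-function estimates. Since this is precisely the material recorded in \cite{N09}, I would cite it at that point rather than reproduce the full Littlewood-Paley machinery, and concentrate the self-contained argument on the $p=2$ estimate and the density reduction.
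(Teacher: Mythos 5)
The paper gives no proof of this lemma at all: it is stated as a prepared fact with the citation \cite{N09} (it is Proposition~1.5.7 there, in the form $\|\delta(u)\|_p \leq c_p\bigl(\|E[u]\|_{L^2([0,T];\R^d)} + \|Du\|_{L^p(\Omega;\, L^2([0,T];\R^d)\otimes L^2([0,T];\R^d))}\bigr)$, which is dominated by the $\DD^{1,p}$-norm), and your sketch correctly reproduces the standard argument behind that citation: density of elementary processes together with closedness of $\delta$, the covariance identity at $p=2$, and for $p\neq 2$ the reduction to Meyer's inequalities via the chaos decomposition and the multiplier theorem, deferring to \cite{N09} for the $L^p$-boundedness of the Riesz transform $D(-L)^{-1/2}$. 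This is the same route (and the same ultimate reference) as the paper's, only one level more explicit, and deferring precisely at the one genuinely hard ingredient matches the paper's own level of rigor, so there is no gap.
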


We will use a kind of Fubini's theorem below. 
\begin{lemma}\label{lem:fubini}
Let $(u_s)_{0\leq s \leq T} \in L^2([0,T]; \DD^{1,2}(L^2([0,T];\R^d)))$, then 
\begin{equation}\label{eq:fubini}
\int_0^T \delta(u_s(\cdot))ds = \delta\Big( \int_0^T u_s(\cdot) ds \Big) \ \mbox{ a.s.}
\end{equation}
\end{lemma}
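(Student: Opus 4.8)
The plan is to prove the identity by duality, exploiting that $\delta$ is by definition the adjoint of $D$, so that it suffices to check the pairing of both sides against a dense family of test functionals. Write $v := \int_0^T u_s(\cdot)\, ds$ for the $L^2([0,T];\R^d)$-valued Wiener functional obtained by integrating out the parameter $s$. The hypothesis $(u_s)_{0\le s\le T} \in L^2([0,T];\DD^{1,2}(L^2([0,T];\R^d)))$ means precisely that $s \mapsto u_s$ is Bochner integrable into the Hilbert space $\DD^{1,2}(L^2([0,T];\R^d))$; hence $v \in \DD^{1,2}(L^2([0,T];\R^d)) \subset \mathrm{Dom}(\delta)$, and since $D$ acts as a bounded linear operator from $\DD^{1,2}(L^2([0,T];\R^d))$ (with its graph norm) into $L^2(\Omega; L^2([0,T];\R^d))$, it commutes with the Bochner integral, giving $Dv = \int_0^T Du_s\, ds$. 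In particular the right-hand side of (\ref{eq:fubini}) is a well-defined element of $L^2(\Omega)$.

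Next I would check that the left-hand side is also a well-defined element of $L^2(\Omega)$. The continuity estimate of Lemma \ref{lem:conti_delta} gives $\|\delta(u_s)\|_2 \le C\,\|u_s\|_{\DD^{1,2}(L^2([0,T];\R^d))}$, whence by the Cauchy-Schwarz inequality
\[
\int_0^T \|\delta(u_s)\|_2\, ds \le C\,T^{1/2}\Big(\int_0^T \|u_s\|_{\DD^{1,2}(L^2([0,T];\R^d))}^2\, ds\Big)^{1/2} < \infty .
\]
Thus $s \mapsto \delta(u_s)$ is Bochner integrable in $L^2(\Omega)$ and $\int_0^T \delta(u_s)\, ds \in L^2(\Omega)$.

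Finally I would pair both sides against an arbitrary smooth Wiener functional $F$ from the dense class used to define $D$ and $\delta$. Using the duality $E[\delta(u_s)F] = E[\langle u_s, DF\rangle_{L^2([0,T];\R^d)}]$ together with the ordinary Fubini theorem, I compute
\[
E\Big[\Big(\int_0^T \delta(u_s)\, ds\Big)F\Big] = \int_0^T E[\langle u_s, DF\rangle]\, ds = E\Big[\Big\langle \int_0^T u_s\, ds,\, DF\Big\rangle\Big] = E[\langle v, DF\rangle] = E[\delta(v)F] .
\]
Since this holds for every $F$ in a set dense in $L^2(\Omega)$, the two sides of (\ref{eq:fubini}) coincide as elements of $L^2(\Omega)$, which gives the claimed almost-sure equality. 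The step I expect to require the most care is the justification of the two interchanges in this chain: the exchange of $E$ with $\int_0^T \cdot\, ds$, and the exchange of the $L^2([0,T];\R^d)$ inner product (an integral in the time variable $r$) with $\int_0^T \cdot\, ds$. Both are genuine applications of Fubini's theorem and are licensed by the integrability bound of the previous paragraph, the joint integrability in $(r,s,\omega)$ following from the assumption $(u_s) \in L^2([0,T];\DD^{1,2}(L^2([0,T];\R^d)))$. As an alternative route, one could first verify (\ref{eq:fubini}) for $u_s$ that are step functions in $s$ with $\DD^{1,2}$-coefficients by means of the explicit formula of Lemma \ref{lem:comp_dual}, and then pass to the limit via the continuity of $\delta$ in Lemma \ref{lem:conti_delta}.
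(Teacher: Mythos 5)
Your proof is correct, but it takes a genuinely different route from the paper's. The paper argues by approximation: it chooses step functions $u^k_s=\sum_{j} a^k_j 1_{B^k_j}(s)$ with coefficients $a^k_j\in\DD^{1,2}(L^2([0,T];\R^d))$ converging to $u$ in $L^2([0,T];\DD^{1,2}(L^2([0,T];\R^d)))$, notes that the identity is immediate for such step functions, and passes to the limit on both sides of (\ref{eq:fubini}) in $L^2(\Omega)$ using the continuity of $\delta$ (Lemma \ref{lem:conti_delta}) --- exactly the ``alternative route'' you mention in your last sentence. Your main argument instead verifies the identity weakly, pairing both sides against a dense class of smooth functionals $F$ and using the adjointness $E[\delta(u_s)F]=E[\langle u_s,DF\rangle]$; this is sound, since both sides are well-defined elements of $L^2(\Omega)$ by your Bochner-integrability bounds, $v=\int_0^T u_s\,ds$ lies in $\DD^{1,2}(L^2([0,T];\R^d))\subset\mathrm{Dom}(\delta)$ because $\DD^{1,2}$ is complete and the integral converges there, and the two interchanges you flag as delicate are in fact easier than a full Fubini argument: each amounts to commuting a continuous linear functional (namely $G\mapsto E[GF]$ on $L^2(\Omega)$, respectively $w\mapsto E[\langle w,DF\rangle]$ on $L^2(\Omega;L^2([0,T];\R^d))$) with a Bochner integral, so no joint-measurability care is needed beyond the strong measurability of $s\mapsto u_s$ that the hypothesis supplies. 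Comparing the two: your duality argument is shorter, uses the $L^2$-in-$s$ assumption only through $L^2([0,T])\subset L^1([0,T])$ (so it would work verbatim under the weaker hypothesis $u\in L^1([0,T];\DD^{1,2}(L^2([0,T];\R^d)))$), and isolates the one substantive point, membership of $v$ in $\mathrm{Dom}(\delta)$; the paper's approximation argument is more quantitative, yielding explicit $L^2$ bounds on both sides along the way, and avoids any appeal to density of smooth functionals in $L^2(\Omega)$. One cosmetic nit: the hypothesis does not ``mean precisely'' Bochner integrability of $s\mapsto u_s$ --- it is strictly stronger --- but this slip only works in your favor.
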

\begin{proof}
Let $u_s^k = \sum_{j=1}^{m_k} a_j^k 1_{B_j^k}(s)$ 
with $a_j^k \in \DD^{1,2}(L^2([0,T];\R^d))$ and $B_j^k \in \B([0,T])$ 
such that $u^k \rightarrow u$ in the norm of $L^2([0,T]; \DD^{1,2}(L^2([0,T];\R^d)))$ 
as $k \rightarrow \infty$. 
Clearly we have 
\[
\int_0^T \delta(u_s^k(\cdot))ds = \delta\Big( \int_0^T u_s^k(\cdot) ds \Big). 
\]
It suffices to check the limit of both sides. By taking $L^2$-norm, 
\begin{align*}
\Big\|\int_0^T \delta(u_s^k(\cdot))ds - \int_0^T \delta(u_s(\cdot))ds \Big\|_2^2
& \leq C_1 \int_0^T \| \delta(u_s^k(\cdot) - u_s(\cdot))\|_2^2 ds
\\ & \leq C_2 \int_0^T \| u_s^k(\cdot) - u_s(\cdot)\|_{\DD^{1,2}(L^2([0,T];\R^d))}^2 ds
\end{align*}
and 
\begin{align*}
\Big\|\delta\Big(\int_0^T u_s^k(\cdot)ds \Big) - \delta\Big(\int_0^T u_s(\cdot)ds \Big)\Big\|_2^2
& \leq C_3 \Big\|\int_0^T (u_s^k(\cdot) - u_s(\cdot)) ds \Big\|_{\DD^{1,2}(L^2([0,T];\R^d))}^2
\\ & \leq C_4 \int_0^T \| u_s^k(\cdot) - u_s(\cdot)\|_{\DD^{1,2}(L^2([0,T];\R^d))}^2 ds.
\end{align*}
Thus we obtain the result (\ref{eq:fubini}) as $k \rightarrow \infty$. 
\end{proof}

We can derive the following fundamental inequalities for Hilbert space valued martingales. 
\begin{lemma}\label{lem:fundamental_ine}
Let $M_t$ be a continuous $K$-valued martingale with respect to a filtration 
$(\F_t)$ which satisfies the usual conditions. Then for every $p>0$, there exists positive constants $K_p$, $c_p < C_p$ such that 

Doob's inequality:  
\[
E\Big[\sup_{0\leq t \leq T}|M_t|_K^p\Big] 
\leq K_p E\Big[|M_T|_K^p\Big].
\]

Burkholder-Davis-Gundy's inequality: 
\[
c_p E\Big[\langle M \rangle_T^{p/2}\Big] \leq E\Big[\sup_{0\leq t \leq T}|M_t|_K^p\Big] 
\leq C_p E\Big[\langle M \rangle_T^{p/2}\Big].
\]
\end{lemma}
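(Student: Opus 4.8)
The plan is to reduce both inequalities to their classical scalar counterparts. For Doob's inequality I would first observe that, since the norm $x\mapsto|x|_K$ is convex and $M_t$ is a $K$-valued martingale, the conditional Jensen inequality gives $|M_s|_K=|E[M_t|\F_s]|_K\le E[|M_t|_K\,|\,\F_s]$, so that $|M_t|_K$ is a nonnegative real-valued submartingale with respect to $(\F_t)$. The classical Doob $L^p$-maximal inequality applied to this submartingale then yields
\[
E\Big[\sup_{0\le t\le T}|M_t|_K^p\Big]\le K_p\,E\big[|M_T|_K^p\big]
\]
for $p>1$, with $K_p=(p/(p-1))^p$; the remaining range $0<p\le 1$ is covered by a domination (Lenglart-type) argument, again with a constant depending only on $p$.

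For the Burkholder--Davis--Gundy inequality the first step is to fix the meaning of $\langle M\rangle_t$. Choosing an orthonormal basis $(e_k)_{k\ge 1}$ of $K$, I write $M_t^{(k)}:=\langle M_t,e_k\rangle_K$ for the scalar component martingales and set $\langle M\rangle_t:=\sum_{k\ge 1}\langle M^{(k)}\rangle_t$; equivalently, $\langle M\rangle_t$ is the unique increasing process making $|M_t|_K^2-\langle M\rangle_t$ a martingale, which exists because the Hilbert-space It\^o formula gives $|M_t|_K^2=2\int_0^t\langle M_s,dM_s\rangle_K+\langle M\rangle_t$. The strategy is then to truncate: let $P_n$ be the orthogonal projection onto $\mathrm{span}(e_1,\dots,e_n)$ and $M_t^n:=P_nM_t$, a continuous martingale valued in an $n$-dimensional subspace. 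For each fixed $n$ the finite-dimensional BDG inequality---itself a consequence of the scalar BDG applied through the decomposition above, with constants $c_p,C_p$ that do not depend on $n$---gives
\[
c_p\,E\big[\langle M^n\rangle_T^{p/2}\big]\le E\Big[\sup_{0\le t\le T}|M_t^n|_K^p\Big]\le C_p\,E\big[\langle M^n\rangle_T^{p/2}\big].
\]

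The final step is to let $n\to\infty$. Because $|P_nM_t|_K^2=\sum_{k\le n}(M_t^{(k)})^2$ increases to $|M_t|_K^2$ for each $t$, the quantities $\sup_{0\le t\le T}|M_t^n|_K$ and $\langle M^n\rangle_T=\sum_{k\le n}\langle M^{(k)}\rangle_T$ are monotone increasing in $n$ with limits $\sup_{0\le t\le T}|M_t|_K$ and $\langle M\rangle_T$ respectively; monotone convergence then transfers both sides to the Hilbert-space setting while preserving the constants. The main obstacle is exactly this dimension-independence: one must verify that the scalar proof of BDG, built only on the identity $|M|_K^2=(\text{martingale})+\langle M\rangle$ together with Doob's inequality, never introduces a constant that degenerates as $n$ grows, so that the monotone limit is legitimate. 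Once that is secured the result follows, the continuity of the paths guaranteeing throughout that the suprema are attained and measurable.
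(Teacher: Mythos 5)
Your overall plan---reduce to the scalar case via the submartingale $|M_t|_K$ for Doob, and via an orthonormal decomposition plus truncation for Burkholder--Davis--Gundy---is the standard route, and note that the paper itself offers no argument at all here, only the citation of \cite{Shigekawa}, Theorem 3.1, so any honest proof is ``different'' from the paper's. But there is a genuine error in your Doob step. Doob's $L^p$ maximal inequality with a constant depending only on $p$ is \emph{false} for $0<p\le 1$, already for real-valued continuous martingales: for $p=1$ this is the $L\log L$ phenomenon (there exist $X\in L^1$, e.g.\ in a Brownian filtration, with $E[\sup_{t\le T}|E[X\mid\F_t]|]=\infty$), and for $p<1$ a martingale run from $M_0=\varepsilon$ to a final value in $\{0,1\}$ with $P(M_T=1)=\varepsilon$ gives $E[(M^*)^p]\asymp \varepsilon^p$ against $E[|M_T|^p]=\varepsilon$, so the ratio blows up as $\varepsilon\to 0$. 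Your proposed ``Lenglart-type'' rescue cannot work: Lenglart domination compares $(M^*)^2$ with $\langle M\rangle$ and yields the BDG-type bound $E[(M^*)^p]\le c_pE[\langle M\rangle_T^{p/2}]$ for $p<2$, i.e.\ a bound by the bracket, never a bound by $E[|M_T|^p]$. The lemma's quantifier ``for every $p>0$'' should be read as attaching to the BDG part only; Doob requires $p>1$, which is what the cited theorem proves and all the paper ever uses (the lemma is invoked with $p\ge 2$). Your submartingale argument for $p>1$ is correct; the claimed extension below $p=1$ should simply be deleted.

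In the BDG half there is a second, reparable gap: the finite-dimensional inequality with constants independent of $n$ is \emph{not} ``a consequence of the scalar BDG applied through the decomposition''. Applying scalar BDG componentwise controls $\sum_{k\le n}E[\langle M^{(k)}\rangle_T^{p/2}]$, whereas you need $E\big[\big(\sum_{k\le n}\langle M^{(k)}\rangle_T\big)^{p/2}\big]$, and for $p\ne 2$ passing between these costs a factor growing with $n$ (the elementary comparison of $\sum_k a_k^{p/2}$ with $(\sum_k a_k)^{p/2}$ goes the wrong way on one side). The dimension-free constant is exactly the crux, and the valid route is the one you only gesture at in your final sentence: run the standard good-$\lambda$ (or Davis-decomposition) proof directly on the real process $|M_t|_K$, using the Hilbert-space It\^o identity
\[
|M_t|_K^2=2\int_0^t\langle M_s,dM_s\rangle_K+\langle M\rangle_t ,
\]
every step of which uses only this identity and scalar maximal inequalities, hence produces constants depending on $p$ alone and works in $K$ outright---making your truncation and monotone-convergence scaffolding (which is itself correctly set up) unnecessary. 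As written, the proposal asserts the dimension-free finite-dimensional BDG without a valid derivation and overstates Doob's range, so it does not yet constitute a proof.
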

\begin{proof}
See  e.g.\ \cite[Theorem 3.1]{Shigekawa}.
\end{proof}

\begin{lemma}\label{lem:infi_Ito_rep}
If $F \in L^p(\F_T^B; K)$ for some $p \geq 2$, then there exists 
a unique process $f_s = (f_s^1, \dots, f_s^d)$ such that  
$f_s^i$ are $K$-valued progressively measurable processes satisfying
\[
F = E[F] + \int_0^T f_s dB_s, 
\]
and 
\begin{equation}\label{eq:Lp_estimate}
E\Big[\Big( \int_0^T \sum_{i=1}^d|f_s^i|_K^2 ds \Big)^{p/2}\Big] \leq C_p E[|F|_K^p].
\end{equation}
In particular, if $F \in \DD^{1,2}(\F_T^B; K)$, then we have the so-called Clark-Ocone formula 
\[
f_s(\omega) = E[D_sF|\F_s^B](\omega) \mbox{ a.e. } (s, \omega) \in [0,T] \times \Omega.
\]
\end{lemma}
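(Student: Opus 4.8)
The plan is to reduce the Hilbert-space-valued statement to the scalar-valued martingale representation theorem and then verify the $L^p$-estimate by the Burkholder-Davis-Gundy inequality from Lemma \ref{lem:fundamental_ine}. First I would treat the case $K = \R$, which is the classical martingale representation theorem: given $F \in L^p(\F_T^B; \R)$ with $p \geq 2$, the martingale $M_t := E[F \mid \F_t^B]$ is continuous (by the Brownian filtration) and admits a representation $M_t = E[F] + \int_0^t f_s\, dB_s$ for a unique predictable process $f$. For general separable $K$, I would expand $F = \sum_{k} \langle F, e_k\rangle_K\, e_k$ along an orthonormal basis $(e_k)$, apply the scalar representation to each coordinate $\langle F, e_k\rangle_K \in L^p(\F_T^B;\R)$ to obtain integrands $f_s^{(k)}$, and set $f_s := \sum_k f_s^{(k)} e_k$, understood componentwise in the $d$ Brownian directions. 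Uniqueness follows coordinatewise.

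Next I would establish the $L^p$-estimate (\ref{eq:Lp_estimate}). The representation exhibits $M_t = E[F] + \int_0^t f_s\, dB_s$ as a continuous $K$-valued martingale whose quadratic variation is $\langle M\rangle_T = \int_0^T \sum_{i=1}^d |f_s^i|_K^2\, ds$. Applying the Burkholder-Davis-Gundy inequality of Lemma \ref{lem:fundamental_ine} to $M$ gives
\[
E\Big[\Big(\int_0^T \sum_{i=1}^d |f_s^i|_K^2\, ds\Big)^{p/2}\Big]
= E\big[\langle M\rangle_T^{p/2}\big]
\leq c_p^{-1} E\Big[\sup_{0\leq t \leq T} |M_t|_K^p\Big]
\leq c_p^{-1} K_p\, E\big[|M_T|_K^p\big],
\]
where the last inequality is Doob's inequality from the same lemma. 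Since $M_T = F$, this is exactly (\ref{eq:Lp_estimate}) with $C_p = c_p^{-1} K_p$. The requirement $p \geq 2$ is what guarantees $F \in L^2$ so that the representation is available and the quadratic-variation bracket is finite.

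Finally, for the Clark-Ocone formula under the stronger hypothesis $F \in \DD^{1,2}(\F_T^B; K)$, I would invoke the scalar Clark-Ocone formula coordinatewise: each $\langle F, e_k\rangle_K \in \DD^{1,2}(\F_T^B;\R)$ has integrand $E[D_s \langle F, e_k\rangle_K \mid \F_s^B]$, and summing against $e_k$ identifies $f_s = E[D_s F \mid \F_s^B]$ by the commutation of $D$ with the (bounded, linear) coordinate projections and with conditional expectation. The main obstacle is the passage from scalar to Hilbert-space-valued integrands: one must check that the componentwise construction $f_s = \sum_k f_s^{(k)} e_k$ defines a genuine $K$-valued progressively measurable process lying in the correct $L^p$ space, rather than merely a formal sum. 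This is precisely controlled by the $L^p$-estimate above, which shows the partial sums converge in $L^p(\Omega; L^2([0,T];K))$, so the limiting integrand is well defined and the representation holds in the $K$-valued sense.
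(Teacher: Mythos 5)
Your proposal is correct, and on the only part the paper actually proves it follows the same route: the estimate (\ref{eq:Lp_estimate}) is obtained from the Burkholder--Davis--Gundy and Doob inequalities of Lemma \ref{lem:fundamental_ine} applied to the continuous $K$-valued martingale $M_t = E[F \mid \F_t^B]$, whose bracket is $\langle M\rangle_T = \int_0^T \sum_{i=1}^d |f_s^i|_K^2\,ds$; your chain $E[\langle M\rangle_T^{p/2}] \leq c_p^{-1}E[\sup_t|M_t|_K^p] \leq c_p^{-1}K_p E[|M_T|_K^p]$ is exactly the paper's argument made explicit (and it silently repairs a typo in the paper's display, where the exponent $p/2$ on $\big|\int_0^T f_s\,dB_s\big|_K$ should be $p$ for the subsequent equality with $E[|F-E[F]|_K^p]$ to hold). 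The difference is one of scope rather than method: the paper proves only (\ref{eq:Lp_estimate}) and delegates the existence and uniqueness of the representation and the Clark--Ocone identity to the references \cite{Shigekawa} and \cite{N09}, whereas you construct them by coordinatewise reduction along an orthonormal basis $(e_k)$ of $K$. That reduction is sound: the scalar martingale representation theorem applies to each $\langle F, e_k\rangle_K$ since $\F_T^B$ is the Brownian $\sigma$-field; the finite-dimensional truncations $F_m = \sum_{k\leq m}\langle F, e_k\rangle_K e_k$ satisfy the estimate with a constant independent of $m$, and since $F_m \to F$ in $L^p(\Omega;K)$ (dominated convergence, as $|F_m|_K \leq |F|_K$), applying the estimate to $F_m - F_{m'}$ shows the integrands form a Cauchy sequence in $L^p(\Omega; L^2([0,T];K^d))$, so the limit $f$ is a genuine progressively measurable $K^d$-valued process --- the point you correctly flagged as the main obstacle; finally, $D$ commutes with the bounded linear functionals $\langle \cdot, e_k\rangle_K$ and with conditional expectation, giving $f_s = E[D_sF\mid\F_s^B]$. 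What your version buys is a self-contained proof of the full statement; what the paper's version buys is brevity, verifying only the one assertion ((\ref{eq:Lp_estimate})) that is not a direct citation from the literature.
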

\begin{proof}
We check only the inequality (\ref{eq:Lp_estimate}) using the inequalities in Lemma \ref{lem:fundamental_ine}: 
\begin{align*}
E\Big[\Big( \int_0^T \sum_{i=1}^d|f_s^i|_K^2 ds\Big)^{p/2} \Big] 
& \leq C_1(p) E\Big[\Big|\int_0^T f_s dB_s\Big|_K^{p/2} \Big] 
\\ & = C_1(p) E[|F - E[F]|_K^p]
\\ & \leq C_2(p) E[|F|_K^p].
\end{align*}
\end{proof}

%%%%%%%%%%%%%%%%%%%%%%%%%%%%%%%%%%%%%%%%%%%%%%%%%%%%%%%%%%%%%%%%
\subsection{Infinite dimensional It\^o calculus for $E_3$}\label{subsec:dualityofsi}
Let us define two Wiener spaces 
$(\W_B, \mathcal{B}(\W_B), P^{\W_B})$ and $(\W_Y, \mathcal{B}(\W_Y), P^{\W_Y})$ 
on which $(B_t)_{0\leq t \leq T}$ and $(Y_t)_{0\leq t \leq T}$ are canonical Brownian motions respectively. 
From now on we specify 
\[
(\Omega, \F, Q) = (\W_B, \mathcal{B}(\W_B), P^{\W_B}) \times (\W_Y, \mathcal{B}(\W_Y), P^{\W_Y}).
\]
We denote by $E^{\W_B}$ and $E^{\W_Y}$ the expectations under $P^{\W_B}$ and $P^{\W_Y}$ respectively. 
Since $B$ and $Y$ are independent, we notice that 
$E[ \ \cdot \ |\F_T^Y] = E^{\W_B}[ \ \cdot \ ]$. 

We now return to prove $\|E_3\|_p = O(1/n)$. 
The fundamental idea to get the order of convergence is as follows (see also \cite{CKL06}): 
Let $F \in L^2(\W_B \times \W_Y; \R)$ and $\theta_s$ be a $\F_s^B$-adapted process with finite moments. 
We are going to give the error estimates for the type of $E^{\W_B}[F\int_{t_i}^{t_{i+1}} \theta_s dB_s]$. 
Let us consider 
\[
L^2(\W_B \times \W_Y; \R) \cong L^2(\W_B; L^2(\W_Y; \R)).
\]
By Lemma \ref{lem:infi_Ito_rep}, we obtain the representation 
$F = E^{\W_B}[F] + \int_0^T f_s dB_s$; see also Picard's paper \cite[Proposition 1]{P84}. 
Applying this representation to $E^{\W_B}[F\int_{t_i}^{t_{i+1}} \theta_s dB_s]$, 
we obtain a conditional duality formula 
\[
E^{\W_B}\Big[F\int_{t_i}^{t_{i+1}} \theta_s dB_s\Big] = E^{\W_B}\Big[\int_{t_i}^{t_{i+1}} f_s \theta_s ds\Big] \in L^2(\W_Y; \R). 
\]
This means that it is possible to prove the convergence of $O(1/n)$ from the term $\int_{t_i}^{t_{i+1}} \cdot \ ds$ 
if $(f_s)$ has good moment estimates. 

\begin{lemma}\label{lem:crucial}
Let $p \geq 2$ and suppose $F \in L^p(\W_B\times\W_Y; \R)$ has the representation $F = E^{W_B}[F] + \int_0^T f_s dB_s$ 
(in Lemma \ref{lem:infi_Ito_rep}), 
then there exists a constant $C = C(p) > 0$ such that 
\begin{equation}\label{eq:crucial}
E\Big[\Big( \int_0^T |f_s|^2 ds \Big)^{p/2}\Big] \leq C E[|F|^p].
\end{equation}
\end{lemma}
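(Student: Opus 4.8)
The plan is to realize the given representation as the It\^o representation of a genuine \emph{scalar} martingale on the product Wiener space, and then to invoke the scalar Doob and Burkholder--Davis--Gundy inequalities of Lemma \ref{lem:fundamental_ine} directly (with $K = \R$), rather than the $K$-valued estimate (\ref{eq:Lp_estimate}). Although $F$ depends on both $B$ and $Y$, conditioning on the whole path of $Y$ reduces it to a scalar It\^o representation in the $B$-variable for each fixed $\omega_Y$; the crucial observation is that, on $\W_B\times\W_Y$, this coincides with the martingale representation relative to the enlarged filtration $\mathcal{G}_t := \F_t^B \vee \F_T^Y$.

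First I would set $M_t := E^{\W_B}[F] + \int_0^t f_s\, dB_s$ and verify that $(M_t)_{0\le t\le T}$ is a continuous $\mathcal{G}_t$-martingale on $(\W_B\times\W_Y, Q)$, with $M_0 = E^{\W_B}[F] = E[F|\F_T^Y]$ and $M_T = F$ by the assumed representation. Here the independence of $B$ and $Y$ does all the work: since $B_t - B_s$ is independent of $\F_s^B \vee \F_T^Y$, the coordinate process $B$ remains a Brownian motion with respect to $\mathcal{G}_t$, while the integrand $f_s$ furnished by Lemma \ref{lem:infi_Ito_rep} is $\mathcal{G}_s$-adapted, so $\int_0^\cdot f_s\, dB_s$ is a true martingale with scalar bracket $\langle M\rangle_T = \int_0^T |f_s|^2\, ds$.

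Next I would apply Lemma \ref{lem:fundamental_ine} with $K = \R$ to $M$. The lower Burkholder--Davis--Gundy bound gives $E[\langle M\rangle_T^{p/2}] \le c_p^{-1} E[\sup_{0\le t\le T}|M_t|^p]$, and Doob's inequality gives $E[\sup_{0\le t\le T}|M_t|^p] \le K_p E[|M_T|^p] = K_p E[|F|^p]$. Chaining the two yields (\ref{eq:crucial}) with $C = c_p^{-1}K_p$; this is the same two-step argument used to prove Lemma \ref{lem:infi_Ito_rep}, now run on the product space.

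The main obstacle is precisely the first step, and it is conceptual rather than computational: one must resist applying (\ref{eq:Lp_estimate}) with $K = L^2(\W_Y)$ directly. There the $L^2(\W_Y)$-norm sits \emph{inside} the $p/2$-th power, so that estimate controls only $E^{\W_B}\big[(E^{\W_Y}[\int_0^T |f_s|^2\, ds])^{p/2}\big]$, and for $p\ge 2$ a conditional Jensen inequality shows this is \emph{smaller} than the quantity in (\ref{eq:crucial}); it therefore gives the wrong bound. The content of the lemma is that, by keeping the scalar bracket under the full product expectation and arguing on the enlarged filtration $\mathcal{G}_t$, one recovers the correct stronger $L^p$ estimate, and checking the $\mathcal{G}_t$-martingale property (equivalently, the progressive measurability of $f_s$ on the product space) is the one place where the independence of the observation noise $Y$ from $B$ is genuinely used.
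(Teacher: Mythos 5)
Your proof is correct and follows essentially the same route as the paper: the published proof likewise identifies the $L^2(\W_Y;\R)$-valued martingale $\int_0^t f_s\,dB_s$ with a scalar It\^o integral whose integrand is progressively measurable with respect to the enlarged filtration $\F_s^B \vee \F_T^Y$ (this is where the independence of $B$ and $Y$ enters, keeping $B$ a Brownian motion for that filtration, and where the cited approximation argument justifies the identification you flag as the main obstacle), and then applies Lemma \ref{lem:fundamental_ine} with $K=\R$ exactly as in your Doob plus Burkholder--Davis--Gundy chain. Your closing observation that applying (\ref{eq:Lp_estimate}) with $K=L^2(\W_Y)$ would, by conditional Jensen, only control the smaller quantity $E^{\W_B}\big[\big(\int_0^T E^{\W_Y}[|f_s|^2]\,ds\big)^{p/2}\big]$ is accurate and correctly identifies why the lemma is not a special case of Lemma \ref{lem:infi_Ito_rep}, a point the paper leaves implicit.
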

\begin{proof}
Recall that $|\cdot |$ is the norm on $\R^d$.  
We can consider the $L^2(\W_Y; \R)$-valued martingale $\int_0^t f_s dB_s$ 
as the $\R$-valued stochastic integral for the $\R^d$-valued process $f_s$ 
which is progressively measurable with respect to the enlarged filtration 
$\F_s^B \vee \F_T^Y$ on $(\Omega, \F, Q)$ 
through usual approximation arguments 
(see e.g.\ \cite[Lemma 21.2]{C11}). 
We can apply Lemma \ref{lem:fundamental_ine} with $K = \R$ to it.  
\end{proof}

\begin{proposition}\label{prop:E_3}
Under the assumption \rm{(A1)}-\rm{(A4)}, for every $p \geq 1$, 
there exists a constant $C = C(p,T) > 0$ such that 
\[
\|E_3\|_p \leq \frac{C}{n}.
\]
\end{proposition}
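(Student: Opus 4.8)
The plan is to exploit the independence of $B$ and $Y$, which gives $E[\,\cdot\,|\F_T^Y] = E^{\W_B}[\,\cdot\,]$, together with the conditional duality formula explained above. Writing $F = g(X_T)\Gamma_T$ and setting $\theta_r^j := \nabla((h^j)^2)(X_r)\sigma(X_r)$, we have
\[
E_3 = -\frac{1}{2}\sum_{j=1}^d E^{\W_B}\Big[F \int_0^T\Big(\int_{\eta(s)}^s \theta_r^j\, dB_r\Big)ds\Big].
\]
By (A1) and (A4) we have $F \in \cap_{q\geq 1}L^q(\W_B\times\W_Y)$ (using $\|\Gamma_T\|_q \leq K(q,T)$ as in the proof of Proposition \ref{prop:E_24}), and by (A2)--(A3) the process $\theta_r^j$ has all moments, uniformly in $r \in [0,T]$.

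First I would interchange the time integral $\int_0^T\,ds$ with the stochastic integral $\int_{\eta(s)}^s\,dB_r$. Since $\theta_r^j$ is adapted, its Skorohod and It\^o integrals coincide, so Lemma \ref{lem:fubini} applied to $u_s(r) = \mathbf{1}_{[\eta(s),s]}(r)\theta_r^j$ yields
\[
\int_0^T\Big(\int_{\eta(s)}^s \theta_r^j\, dB_r\Big)ds = \int_0^T \psi(r)\,\theta_r^j\, dB_r,\qquad \psi(r) := \int_0^T \mathbf{1}_{\{\eta(s)\leq r < s\}}\,ds.
\]
A direct computation shows $\psi(r) = t_{i+1}-r$ for $r \in [t_i, t_{i+1})$, hence $0\leq \psi(r)\leq T/n$; this uniform bound is precisely the source of the order $1/n$.

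Next I would apply the conditional duality formula: using the It\^o representation $F = E^{\W_B}[F] + \int_0^T f_s\,dB_s$ from Lemma \ref{lem:infi_Ito_rep} together with the It\^o isometry on $\W_B$ (with $\omega_Y$ frozen), we obtain
\[
E_3 = -\frac{1}{2}\sum_{j=1}^d E^{\W_B}\Big[\int_0^T \psi(r)\, f_r\cdot\theta_r^j\, dr\Big].
\]
Taking $L^p(\W_Y)$-norms, using that conditional expectation is an $L^p$-contraction, the bound $\psi \leq T/n$, and the Cauchy--Schwarz inequality in $(r,\omega)$, I would estimate
\[
\|E_3\|_p \leq \frac{T}{2n}\sum_{j=1}^d \Big\|\Big(\int_0^T |f_r|^2\,dr\Big)^{1/2}\Big\|_{2p}\,\Big\|\Big(\int_0^T |\theta_r^j|^2\,dr\Big)^{1/2}\Big\|_{2p}.
\]
The first factor is controlled by Lemma \ref{lem:crucial}, which gives $\big\|(\int_0^T|f_r|^2dr)^{1/2}\big\|_{2p}\leq C\|F\|_{2p} < \infty$, and the second factor is finite by the moment bounds on $\theta^j$ and the moments of $X$. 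Combining these yields $\|E_3\|_p \leq C/n$.

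The main obstacle I expect is the rigorous justification of the duality step in this conditional, Hilbert-space-valued setting: the representation of Lemma \ref{lem:infi_Ito_rep} is taken on $\W_B$ with $\omega_Y$ frozen, so one must verify that $f_r = f_r(\omega_B,\omega_Y)$ is jointly measurable and progressively measurable for the enlarged filtration, and that the It\^o isometry survives the conditioning on $\F_T^Y$. This is exactly the content of Lemma \ref{lem:crucial}, whose proof reinterprets $\int_0^t f_s\,dB_s$ as an $\R$-valued stochastic integral against the enlarged filtration $\F_s^B \vee \F_T^Y$; once that identification is in hand, the remaining estimates are routine applications of the inequalities in Lemma \ref{lem:fundamental_ine}.
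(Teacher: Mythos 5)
Your proof is correct and follows essentially the same route as the paper: the representation $F = E^{\W_B}[F] + \int_0^T f_s\,dB_s$ from Lemma \ref{lem:infi_Ito_rep}, the duality/isometry on $\W_B$ with $\omega_Y$ frozen (justified, as you note, exactly as in Lemma \ref{lem:crucial} via the enlarged filtration $\F_s^B \vee \F_T^Y$), and Lemma \ref{lem:crucial} to control $\big(\int_0^T |f_r|^2\,dr\big)^{1/2}$ in $L^{2p}$. The only difference is cosmetic: you first collapse the double integral by stochastic Fubini into $\int_0^T \psi(r)\theta_r\,dB_r$ with the deterministic kernel $0 \le \psi(r) = \eta(r)+T/n - r \le T/n$, whereas the paper keeps the double integral and bounds $\int_0^T\int_{\eta(s)}^s E^{\W_B}[f_r\theta_r]\,dr\,ds$ directly with $\sup_r E^{\W_B}[|\theta_r|^2]^{1/2}$ — the mechanism producing the $1/n$ rate is identical.
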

\begin{proof}
We prove only the one dimensional case. 
Let $\theta_r = \frac{1}{2}(h^2)'(X_r)\sigma(X_r)$. 
Applying Lemma \ref{lem:infi_Ito_rep} and \ref{lem:crucial} to $g(X_T)\Gamma_T$, we have a representation 
\begin{equation}\label{eq:rep_for_E3}
g(X_T)\Gamma_T = E^{\W_B}[g(X_T)\Gamma_T ] + \int_0^T f_s dB_s. 
\end{equation}
Using It\^o's formula for stochastic integrals 
with respect to $B_t$, we can deduce that 
\[
E^{\W_B}\Big[ E^{\W_B}[g(X_T)\Gamma_T ] \int_0^T\int_{\eta(s)}^s \theta_r dB_r ds \Big] = 0
\]
and 
\begin{align*}
E^{\W_B}\Big[\int_0^T f_s dB_s \int_0^T\int_{\eta(s)}^s \theta_r dB_r ds \Big]
&= \int_0^T E^{\W_B}\Big[\int_0^T f_r dB_r \int_{\eta(s)}^s \theta_r dB_r \Big] ds 
\\ &= \int_0^T E^{\W_B}\Big[ \int_{\eta(s)}^s f_r \theta_r dr \Big] ds
\\ &= \int_0^T\int_{\eta(s)}^s E^{\W_B}[f_r \theta_r] dr ds.
\end{align*}
We notice that 
\[
|E^{\W_B}[f_r\theta_r]| \leq E^{\W_B}[|f_r|^2]^{1/2} \sup_{0\leq r \leq T}E^{\W_B}[|\theta_r|^2]^{1/2}.
\]
Therefore the estimate (\ref{eq:crucial}) in Lemma \ref{lem:crucial} implies 
\begin{align*}
\Big\|E^{\W_B}\Big[\int_0^T\int_{\eta(s)}^s f_r \theta_r dr ds \Big]\Big\|_p^p 
&\leq \sup_{0\leq r \leq T} E^{\W_B}[|\theta_r|^2]^{p/2} 
E^{\W_Y}\Big[\Big(\int_0^T\int_{\eta(s)}^{\eta(s)+T/n} E^{\W_B}[|f_r|^2]^{1/2}dr ds\Big)^p \Big]
\\ & \leq C_1 \Big(\frac{T}{n}\Big)^p E\Big[\Big(\int_0^T |f_r|^2 dr\Big)^{p/2} \Big]
\\ & \leq \frac{C_2}{n^p} \|g(X_T)\Gamma_T\|_p^p
\end{align*}
for some constant $C_2 = C_2(p,T)$. 
\end{proof}

%%%%%%%%%%%%%%%%%%%%%%%%%%%%%%%%%%%%%%%%%%%%%%%%%%%%%%%%%%%%%%%%
\subsection{Partial Malliavin calculus for $E_1$}\label{subsec:partial}
In order to analyze the $E_1$ term, we again use the representation (\ref{eq:rep_for_E3}) 
\[
g(X_T)\Gamma_T = E^{\W_B}[g(X_T)\Gamma_T ] + \int_0^T f_s dB_s. 
\]
We can then obtain 
\begin{align*}
E_1 &= E\Big[g(X_T)\Gamma_T \sum_{j=1}^d\int_0^T \Big(\int_{\eta(s)}^s \nabla(h^j)(X_r)\sigma(X_r)dB_r\Big) dY^j_s \Big|\F_T^Y\Big] 
\\ &= E\Big[\int_0^T f_s dB_s \sum_{j=1}^d\int_0^T \Big(\int_{\eta(s)}^s \nabla(h^j)(X_r)\sigma(X_r)dB_r\Big) dY^j_s \Big|\F_T^Y\Big].
\end{align*}
We should mention that it is impossible to apply It\^o calculus to the inside of the conditional expectation 
since $(f_s)$ is {\it not} adapted to $\F_s^B \vee \F_s^Y$. 

For this reason, instead of It\^o calculus, we review partial Malliavin calculus introduced in \cite{NZ89}. 
Consider Malliavin calculus for each space of 
$(\W_B, \mathcal{B}(\W_B), P^{\W_B})$ and $(\W_Y, \mathcal{B}(\W_Y), P^{\W_Y})$. 
Let us denote the Sobolev spaces, the Malliavin derivative, and the Skorohod integral 
on  $(\W_B, \mathcal{B}(\W_B), P^{\W_B})$ by 
$\DD_B^{k,p}$, $D_t^B$, $\delta_B$, 
and on $(\W_Y, \mathcal{B}(\W_Y), P^{\W_Y})$ by 
$\DD_Y^{k,p}$, $D_t^Y$, $\delta_Y$. 
We note that $D^B$ and $D^Y$ are naturally extended to $(N+d)$-dimensional Wiener space $(\Omega, \F, Q)$, 
and the pair $(D^B, D^Y)$ coincides with the standard Malliavin derivative $D : \Omega \rightarrow L^2([0,T];\R^{N+d})$ 
in the following sense: Let us consider an orthogonal decomposition 
\[
L^2([0,T];\R^{N+d}) = L_B^2 \oplus L_Y^2
\]
with 
\begin{align*}
L_B^2 &= \{f \in L^2([0,T];\R^{N+d}) : f^{(j)} \equiv 0 \mbox{ for } N < j \leq N+d\} \cong L^2([0,T];\R^{N}),
\\ L_Y^2 &= \{f \in L^2([0,T];\R^{N+d}) : f^{(j)} \equiv 0 \mbox{ for } 1 \leq j \leq N\} \cong L^2([0,T];\R^{d}).
\end{align*}
Let $\Pi_B$ and $\Pi_Y$ be the projections from $L^2([0,T];\R^{N+d})$ to $L_B^2$ and $L_Y^2$ respectively. 
Then we can define $D^B := \Pi_B \circ D$ and $D^Y := \Pi_Y\circ D$ on 
the $(N+d)$-dimensional Wiener space $(\Omega, \F, Q)$. 
This formulation is called the ``partial'' Malliavin calculus (\cite{KS84}, \cite{NZ89}).  

In this section, we realize partial Malliavin calculus using 
a ``Sobolev space valued'' Sobolev space $\DD_B^{1,2}(\DD_Y^{1,2}(\R))$. 
Let us start the detailed formulation. 
Let $K$ be a real separable Hilbert space and 
$G \in L^2(\W_B; K)$. We define by $J_t^B$ the projection so that $G = E^{\W_B}[G] + \int_0^T J_s^B(G)dB_s$. 
In particular, if we take $K = \DD_Y^{1,2}(\R)$ and 
\[
G \in \DD_B^{1,2}(\DD_Y^{1,2}(\R)) \subset L^2(\W_B; \DD_Y^{1,2}(\R)),
\]
we have by the Clark-Ocone formlua 
\begin{equation}\label{eq:OCformula}
J_s^B(G) = E^{\W_B}[ D_s^B G |\F_s^B] \in \DD_Y^{1,2}(\R).
\end{equation}
We note that $\DD_B^{1,2}(\DD_Y^{1,2}(\R)) \not= \DD_{(B,Y)}^{2,2}(\R)$ 
where $\DD_{(B,Y)}^{2,2}(\R)$ is the usual Sobolev space on $\W_B \times \W_Y$. 
One notices that the space $\DD_B^{1,2}(\DD_Y^{1,2}(\R))$ is spanned by 
products of smooth functionals: 
\begin{align*}
F &= f\Big(\int_0^T h_1(s)dB_s, \dots, \int_0^T h_m(s)dB_s\Big)g\Big(\int_0^T \theta_1(s)dY_s, \dots, \int_0^T \theta_\ell(s)dY_s\Big)
\\ & \in L^2(\W_B \times \W_Y; \R) \cong L^2(\W_B; L^2(\W_Y; \R)) 
\end{align*}
with $\{h_i\}_{1\leq i \leq m} \subset L^2([0,T];\R^{N})$, $\{\theta_i\}_{1\leq i \leq \ell} \subset L^2([0,T];\R^{d})$, 
real-valued $C^1$-functions $f$ and $g$. 

Let us first present auxiliary lemma which will be used in later computations. 
\begin{lemma}\label{lem:basicDD}
{\rm (i):} For $G \in L^2(\W_B; \DD_Y^{1,2}(K))$, 
\[
D^Y E^{\W_B}[G] = E^{\W_B}[D^Y G] \ \mbox{ a.s.}
\]
{\rm (ii):} If $\xi \in \DD_B^{1,p}(L^2([0,T];\R^d))$ with some $p \geq 2$, then 
$\int_0^T \xi_s dY_s \in \DD_B^{1,p}(\DD_Y^{1,2}(\R))$ and 
\begin{align*}
D^B\Big( \int_0^T \xi_s dY_s \Big) &= \int_0^T (D^B \xi_s) dY_s, 
\\ D^Y\Big( \int_0^T \xi_s dY_s \Big) &= \xi, 
\\ D^YD^B\Big( \int_0^T \xi_s dY_s \Big) &= D^B D^Y\Big( \int_0^T \xi_s dY_s \Big) = D^B \xi.
\end{align*}
\end{lemma}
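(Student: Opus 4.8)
The plan is to exploit the fact that $D^Y$ (resp.\ $\delta_Y$) acts only on the $\W_Y$-factor while $E^{\W_B}$ and $D^B$ act only on the $\W_B$-factor, so that every asserted identity is an instance of ``operators living on independent factors commute.'' In both parts I would first verify the identity on the dense class of functionals that are finite linear combinations of tensor products $F(B)\,g(\cdots Y\cdots)$ described after \eqref{eq:OCformula}, where each derivative is computed explicitly by the product rule, and then extend to the general case by a closability/continuity argument.

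For (i), fix such a product $G = F\,\Psi$ with $F = F(B)$ scalar-valued and $\Psi = \Psi(Y) \in \DD_Y^{1,2}(K)$. Then $E^{\W_B}[G] = E^{\W_B}[F]\,\Psi$, so $D^Y E^{\W_B}[G] = E^{\W_B}[F]\,D^Y\Psi$; on the other hand $D^Y G = F\,D^Y\Psi$ because $D^Y F = 0$, whence $E^{\W_B}[D^Y G] = E^{\W_B}[F]\,D^Y\Psi$, and the two agree. For general $G \in L^2(\W_B;\DD_Y^{1,2}(K))$ I would regard $E^{\W_B}[G]$ as the Bochner integral $\int_{\W_B} G\,dP^{\W_B}$ with values in the Banach space $\DD_Y^{1,2}(K)$; since $G \in L^2 \subset L^1(\W_B;\DD_Y^{1,2}(K))$ this integral exists and already lies in $\DD_Y^{1,2}(K)$, and since $D^Y$ is a closed operator which is bounded for the graph norm of $\DD_Y^{1,2}(K)$, it commutes with the Bochner integral. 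This gives $D^Y E^{\W_B}[G] = \int_{\W_B} D^Y G\,dP^{\W_B} = E^{\W_B}[D^Y G]$. Equivalently, one approximates $G$ by simple tensors, applies the product computation, and passes to the limit using that $E^{\W_B}$ is a contraction and $D^Y$ is closed.

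For (ii), I would start from a simple integrand $\xi_s = \sum_i a_i\,\phi_i(s)$ with $a_i \in \DD_B^{1,p}(\R^d)$ and deterministic $\phi_i \in L^2([0,T])$, so that $\int_0^T \xi_s\,dY_s = \sum_i a_i\cdot\int_0^T \phi_i(s)\,dY_s$. Here $\int_0^T \phi_i\,dY_s$ is a first-chaos element of $\W_Y$ on which $D^B$ vanishes, while $a_i$ is annihilated by $D^Y$; hence $D^B$ hits only the $a_i$ and $D^Y$ only the Wiener integrals. The first two identities then follow from the elementary rules $D^Y\big(\int_0^T \phi_i\,dY_s\big) = \phi_i$, $D^Y a_i = 0$, and $D^B\big(\int_0^T \phi_i\,dY_s\big) = 0$, after which the mixed identity $D^Y D^B = D^B D^Y = D^B\xi$ is obtained by applying the second formula to $D^B\xi$, which is itself $D^Y$-free. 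To reach a general $\xi \in \DD_B^{1,p}(L^2([0,T];\R^d))$ I would take simple integrands $\xi^{(k)} \to \xi$ in $\DD_B^{1,p}(L^2([0,T];\R^d))$ and pass to the limit: the continuity of the $Y$-Skorohod integral (the $\W_Y$-analog of Lemma \ref{lem:conti_delta}) controls $\int_0^T \xi^{(k)}_s\,dY_s$ and $\int_0^T (D^B\xi^{(k)}_s)\,dY_s$ in $L^p$, while the isometry $\big\|D^Y\!\int_0^T \xi^{(k)}_s\,dY_s\big\| = \|\xi^{(k)}\|$ controls the $D^Y$-derivatives; closedness of $D^B$ and $D^Y$ then promotes the limit into $\DD_B^{1,p}(\DD_Y^{1,2}(\R))$ and preserves all three formulas.

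I expect the main obstacle to be this limiting argument in (ii): one must verify that $\int_0^T \xi_s\,dY_s$ genuinely belongs to the iterated Sobolev space $\DD_B^{1,p}(\DD_Y^{1,2}(\R))$, which requires simultaneous control of the functional, its $D^B$-derivative (via the continuity of $\delta_Y$) and its $D^Y$-derivative, together with the justification that $D^B$ really commutes with the $Y$-integral through the approximation. Part (i) is comparatively routine once one adopts the Bochner-integral viewpoint, the only delicate point being the measurability and integrability of $G$ as a $\DD_Y^{1,2}(K)$-valued map.
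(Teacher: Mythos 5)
Your proposal is correct and takes essentially the same approach as the paper: part (i) is the same approximation-by-simple-functionals argument (the Bochner-integral viewpoint is just a repackaging of it) combined with closedness of $D^Y$, and part (ii) reconstructs exactly the argument behind Nualart's Proposition 1.3.8 --- commutation of the derivative with the Skorohod integral, with the boundary term absent because $D^B(Y_t)=0$ and $D^Y\xi=0$ --- which is precisely what the paper cites. The only difference is that you write out in full the density-and-closedness limiting step that the paper compresses into that citation.
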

\begin{proof}
(i): We choose an approximation sequence $(G_k)$ of the form
$G_k = \sum_{i=1}^m S_i 1_{A_i}$, $S_i \in \DD_Y^{1,2}(K)$ and $A_i \in \mathcal{B}(\W_B)$. 
For each $k$, $G_k$ clearly satisfies the desired equality. Thus we obtain the result 
using the continuity of $D$. 
(ii): This is a version of the proof of \cite[Proposition 1.3.8]{N09}, recall that $D^B(Y_t) = 0$.
\end{proof}

For the proof of the estimate $\|E_1\|_p \leq C/n$, we will take an approximation sequence $(Z_\ell)_\ell \subset \DD_B^{1,2p}(\R)$ 
such that $Z_\ell \rightarrow g(X_T)$ in $L^{2p}(\W_B)$ as $\ell \rightarrow \infty$. 
The following lemma plays a key role for the estimate of $E_1$.

\begin{lemma}\label{lem:key}
Let $p \geq 2$ and $Z \in \DD_B^{1,2p}(\R)$. 
Then under the assumptions \rm{(A2)}-\rm{(A4)}, $Z\Gamma_T(\rho) \in \DD_B^{1,p}(\DD_Y^{1,2}(\R))$. 
Moreover, let $(\theta_s)$ be a $\R^d$-valued continuous $\F_s^B$-progressively measurable process with 
$E[\sup_{0\leq s \leq T}|\theta_s|^4]^{1/4} \leq M$, then there exists a constant $C = C(p,T)$ 
such that 
\begin{equation}\label{eq:keyestimate}
E^{\W_Y}\Big[ \Big( \int_0^T \esssup\displaylimits_{0\leq r \leq T} |E^{\W_B}[ D_r^Y J_s^B(Z\Gamma_T(\rho)) \cdot \theta_s]|^2 ds \Big)^{p/2}\Big] 
\leq M^p C \| Z\Gamma_T(\rho) \|_p^p. 
\end{equation}
\end{lemma}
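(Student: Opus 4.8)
The plan is to treat the two assertions of the lemma separately: first the Sobolev membership, then the quantitative estimate. Throughout I write $G := Z\Gamma_T(\rho)$ and record the one explicit computation everything rests on. Since $Z$ depends only on $B$ we have $D_r^Y Z = 0$, and writing $\Gamma_T(\rho) = \exp(\rho\log\Phi_T + (1-\rho)\log\tilde\Phi_T)$, each of $\log\Phi_T,\log\tilde\Phi_T$ is the sum of a $dY$-stochastic integral with $\F^B$-adapted integrand ($h^j(X_s)$, resp.\ $h^j(X_{\eta(s)})$) and an $\F_T^B$-measurable Lebesgue integral. By Lemma \ref{lem:basicDD}(ii) such $dY$-integrals lie in $\DD_B^{1,q}(\DD_Y^{1,2}(\R))$ for every $q$, with $D^Y(\int \xi\, dY) = \xi$ and $D^B$ acting on $X$ through the SDE flow, which is smooth under \rm{(A2)}--\rm{(A3)}. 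Combining the chain rule for the exponential, H\"older's inequality, the moment bounds \rm{(A4)} on $\Phi_T,\tilde\Phi_T$ (hence on $\Gamma_T(\rho)$), and $Z\in\DD_B^{1,2p}(\R)$, I would conclude $G\in\DD_B^{1,p}(\DD_Y^{1,2}(\R))$. The same computation yields the identity I will use repeatedly, namely $D_r^Y G = G\,\psi_r$ with $\psi_r := \rho h(X_r) + (1-\rho)h(X_{\eta(r)})$, since $D_r^Y\log\Phi_T = h(X_r)$ and $D_r^Y\log\tilde\Phi_T = h(X_{\eta(r)})$. The key feature is that $\psi_r$ is $\F_T^B$-measurable and dominated, uniformly in $r$, by the single $\W_B$-functional $\Lambda := C\sup_{0\le t\le T}|h(X_t)|$, which has finite moments of every order by \rm{(A3)} and the moment estimates for $\sup_t|X_t|$ under \rm{(A2)}.

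For the estimate I would first reduce it to a Clark--Ocone integrand bound. Since $G\in\DD_B^{1,p}(\DD_Y^{1,2}(\R))$, formula (\ref{eq:OCformula}) gives $J_s^B(G) = E^{\W_B}[D_s^B G\,|\,\F_s^B]$. Because $D^Y$ commutes with $E^{\W_B}[\cdot]$ and with the conditioning on $\F_s^B$ (Lemma \ref{lem:basicDD}(i)) and with $D^B$ (Lemma \ref{lem:basicDD}(ii)), I obtain the commutation relation $D_r^Y J_s^B(G) = J_s^B(D_r^Y G) = J_s^B(G\psi_r)$. Applying the conditional Cauchy--Schwarz inequality in $\W_B$ and using that $\theta_s$ is $\F_s^B$-measurable with $E^{\W_B}[|\theta_s|^2]\le E[\sup_s|\theta_s|^2]\le M^2$, the inner quantity satisfies $|E^{\W_B}[D_r^Y J_s^B(G)\cdot\theta_s]| \le M\,E^{\W_B}[|J_s^B(G\psi_r)|^2]^{1/2}$. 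Taking $\esssup_r$, then the $s$-integral and the $(\cdot)^{p/2}$ power under $E^{\W_Y}$, the claim (\ref{eq:keyestimate}) reduces to proving $E^{\W_Y}\big[\big(\int_0^T \esssup_r E^{\W_B}[|J_s^B(G\psi_r)|^2]\,ds\big)^{p/2}\big] \le C\,\|G\|_p^p$.

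To finish I would eliminate the $\esssup_r$ using the uniform domination $|\psi_r|\le\Lambda$: the only $r$-dependence of $J_s^B(G\psi_r)$ enters through the scalar factor $\psi_r$, which is bounded $r$-uniformly by the $r$-free $\W_B$-functional $\Lambda$, so the essup can be absorbed by passing to the integrand of the single functional $G\Lambda$ up to a controllable error. Once the essup is removed the $s$-integral regains the isometric structure $\int_0^T|J_s^B(\,\cdot\,)|^2\,ds$, at which point Lemma \ref{lem:crucial} converts it into $\|G\Lambda\|_p^p$. Finally, in the typical case of \rm{(A4)} where $h$ is bounded, $\Lambda$ is a constant and $\|G\Lambda\|_p\le C\|G\|_p$; in general the finite moments of $\Lambda$ are absorbed into the constant $C$ via H\"older's inequality. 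Reinstating the factor $M^p$ from Cauchy--Schwarz yields (\ref{eq:keyestimate}).

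The main obstacle is precisely this last reconciliation of the $\esssup_r$ with the $L^p$-martingale-moment estimate of Lemma \ref{lem:crucial}, which by its nature controls a time-average $\int_0^T\cdots ds$ rather than a supremum over the Malliavin time $r$; the isometry cannot be applied termwise in $r$. What rescues the argument is the explicit identity $D_r^Y G = G\psi_r$ together with the $r$-uniform bound $|\psi_r|\le\Lambda$, which is special to the exponential form of $\Gamma_T(\rho)$. A secondary, more routine point is the justification of the several interchanges of $D^Y$ with $E^{\W_B}$, with $D^B$, and with the conditioning on $\F_s^B$; these are all supplied by Lemma \ref{lem:basicDD} through approximation by products of smooth functionals.
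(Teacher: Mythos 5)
Your proposal tracks the paper's proof faithfully up to the decisive step: the membership claim via the chain rule for the exponential and \rm{(A4)} (the paper does this with the truncated series $\Gamma_T(\rho,k)=\sum_{l\leq k}(\log\Gamma_T(\rho))^l/l!$ and a limit), the explicit identity $D_r^Y(Z\Gamma_T(\rho))=Z\Gamma_T(\rho)\psi_r$ with $\psi_r=\rho h(X_r)+(1-\rho)h(X_{\eta(r)})$, and the commutation $D_r^YJ_s^B(Z\Gamma_T(\rho))=E^{\W_B}[D_s^B(Z\Gamma_T(\rho)\psi_r)\,|\,\F_s^B]$ are all exactly the paper's steps, and your reduction $|E^{\W_B}[D_r^YJ_s^B(G)\cdot\theta_s]|\leq M\,E^{\W_B}[|J_s^B(G\psi_r)|^2]^{1/2}$ is sound. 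The genuine gap is your final move: eliminating the $r$-variable by ``absorbing'' $\psi_r$ into the single functional $G\Lambda$ via the pointwise bound $|\psi_r|\leq\Lambda$. The map $F\mapsto J_s^B(F)=E^{\W_B}[D_s^BF\,|\,\F_s^B]$ is a conditional expectation of a Malliavin derivative and is \emph{not} monotone under pointwise domination of $F$: an a.s.\ bound $|\psi_r|\leq\Lambda$ gives no control of $|J_s^B(G\psi_r)|$ by $|J_s^B(G\Lambda)|$, and the ``controllable error'' is never identified. If you instead try $|E^{\W_B}[\,\cdot\,|\,\F_s^B]|\leq E^{\W_B}[|\cdot|\,|\,\F_s^B]$, you are left with terms of the form $E^{\W_B}[|D_s^BG|\,\Lambda\,|\,\F_s^B]$, and $\int_0^T E^{\W_B}[|D_s^BG|\,|\,\F_s^B]^2\,ds$ is controlled only by a $\DD_B^{1,2}$-norm of $G$, not by $\|G\|_p$. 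That is fatal for the lemma's purpose: the right side of (\ref{eq:keyestimate}) must contain only $\|Z\Gamma_T(\rho)\|_p$ so that the estimate survives the approximation $Z_\ell\to g(X_T)$ in $L^{2p}$ for merely measurable $g$, for which $\|Z_\ell\|_{\DD_B^{1,2p}}$ is unbounded.

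The paper's resolution of the very obstacle you flag is to apply the Leibniz rule \emph{before} estimating: $D_s^B(G\psi_r)=(D_s^BG)\psi_r+G\,D_s^B\psi_r$ inside the conditional expectation, so that after pairing with $\theta_s$ and Cauchy--Schwarz one gets
\begin{equation*}
|E^{\W_B}[D_r^YJ_s^B(G)\cdot\theta_s]|^2
\leq 2\,E^{\W_B}[|J_s^B(G)|^2]\,E^{\W_B}[|\psi_r\cdot\theta_s|^2]
+2\,E^{\W_B}[|G|^2]\,E^{\W_B}[|D_s^B\psi_r\cdot\theta_s|^2],
\end{equation*}
in which all $r$-dependence sits in the factors $\psi_r$ and $D_s^B\psi_r$, whose moments are bounded uniformly in $r,s$ by (\ref{eq:proof_2}) and \rm{(A3)}; the $\esssup_r$ is then harmless because the bound is $r$-free. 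This is also precisely where the fourth-moment hypothesis $E[\sup_s|\theta_s|^4]^{1/4}\leq M$ is consumed (a second Cauchy--Schwarz splitting $\theta_s$ from $\psi_r$ and from $D_s^B\psi_r$), whereas your route only ever uses the second moment of $\theta$ --- a symptom that it has left the intended mechanism. After this decomposition, Jensen's inequality and Lemma \ref{lem:crucial} give $E^{\W_Y}[(\int_0^TE^{\W_B}[|J_s^B(G)|^2]\,ds)^{p/2}]\leq C\,E[|G|^p]$, and the $E^{\W_B}[|G|^2]$ term is handled by Jensen directly. So: keep your first two paragraphs essentially as written, but replace the domination argument by the product-rule decomposition; as it stands, the proposal does not prove (\ref{eq:keyestimate}).
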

\begin{proof}
We can check that $X_t \in \cap_{p\geq 1}\DD_B^{1,p}$ under Assumption \rm{(A2)}. 
Using the chain rule of Malliavin derivative, we obtain from Lemma \ref{lem:basicDD} and Assumption \rm{(A4)}
\[
\Gamma_T(\rho, k) := \sum_{l=0}^k \frac{(\log(\Gamma_T(\rho)))^l}{l!} \in \bigcap_{p\geq 1}\DD_B^{1,p}(\DD_Y^{1,2}(\R)).
\]
Thus taking the limit $k \rightarrow \infty$, we can show that 
\[
\Gamma_T(\rho) \in \bigcap_{p\geq 1}\DD_B^{1,p}(\DD_Y^{1,2}(\R)), 
\]
which implies $Z\Gamma_T(\rho) \in \DD_B^{1,p}(\DD_Y^{1,2}(\R))$. 

We now start to prove the desired inequality (\ref{eq:keyestimate}). 
Applying the Clark-Ocone formula (\ref{eq:OCformula}) to $Z\Gamma_T(\rho)$, we deduce that
\[
D_r^Y J_s^B(Z\Gamma_T(\rho)) = D_r^Y E^{\W_B}[ D_s^B (Z\Gamma_T(\rho)) |\F_s^B] =  E^{\W_B}[ D_s^B (Z D_r^Y\Gamma_T(\rho)) |\F_s^B]
\]
almost every $(r,s,\omega) \in [0,T]^2 \times \Omega$. 
We notice that
\[
D_r^Y\Gamma_T(\rho) = \exp(\rho \log(\Phi_T) + (1-\rho)\log(\tilde{\Phi}_T)) (\rho h(X_r)+(1-\rho)h(X_{\eta(r)}))
\]
and then 
\begin{align*}
D_s^B (Z D_r^Y\Gamma_T(\rho)) = &  D_s^B(Z\Gamma_T(\rho)) (\rho h(X_r)+(1-\rho)h(X_{\eta(r)})) 
\\ & + Z\Gamma_T(\rho) D_s^B (\rho h(X_r)+(1-\rho)h(X_{\eta(r)})).
\end{align*}
This formula and the Cauchy-Schwarz inequality for the conditional expectation $E[\cdot|\F_s^B]$ imply 
\begin{align*}
|E^{\W_B}[ D_r^Y J_s^B(Z\Gamma_T(\rho)) \cdot \theta_s]|^2 
\leq & \ 2 E^{\W_B}[| J_s^B(Z\Gamma_T(\rho)) |^2] E^{\W_B}[|(\rho h(X_r)+(1-\rho)h(X_{\eta(r)})) \cdot \theta_s |^2]
\\ & + 2 E^{\W_B}[| Z\Gamma_T(\rho) |^2] E^{\W_B}[|D_s^B (\rho h(X_r)+(1-\rho)h(X_{\eta(r)}))\cdot \theta_s|^2].
\end{align*}
We refer for the reader to the basic estimate (\cite{N09}): for any $q \geq 1$, 
\begin{equation}\label{eq:proof_2}
E^{\W_B}\Big[\sup_{0\leq t \leq T}|X_t|^q\Big] 
+ \sup_{0\leq s \leq T}E^{\W_B}\Big[\sup_{0\leq t \leq T}|D_s^B X_t|^q\Big] \leq C_1(q,T) < \infty.
\end{equation}
The above inequality allows us to show that
\[
|E^{\W_B}[ D_r^Y J_s^B(Z\Gamma_T(\rho)) \cdot \theta_s]|^2 
\leq C_2(p,T) ( E^{\W_B}[| J_s^B(Z\Gamma_T(\rho)) |^2] + E^{\W_B}[| Z\Gamma_T(\rho) |^2] ).
\]
We can show by Jensen's inequality and Lemma \ref{lem:crucial} that 
\begin{align*}
E^{\W_Y}\Big[ \Big( \int_0^T E^{\W_B}[ |J_s^B(Z\Gamma_T(\rho))|^2] ds \Big)^{p/2}\Big] 
& \leq E\Big[ \Big( \int_0^T |J_s^B(Z\Gamma_T(\rho))|^2 ds \Big)^{p/2}\Big]
\\ & \leq C_3(p) E[|Z\Gamma_T(\rho)|^p].
\end{align*}
Using these inequalities, we obtain the constant $C$ in the assertion. 
\end{proof}

%%%%%%%%%%%%%%%%%%%%%%%%%%%%%%%%%%%%%%%%%%%%%%%%%%%%%%%%%%%%%%%%%%%%%%%%%
We now finish the proof of the main theorem. 
\begin{proposition}
Let the assumptions \rm{(A1)}-\rm{(A4)} hold. Then for every $p \geq 2$, 
there exists a constant $C = C(p,T) > 0$ such that 
\[
\| E_1 \|_p \leq \frac{C}{n}.
\]
\end{proposition}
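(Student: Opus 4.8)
The plan is to reuse the representation (\ref{eq:rep_for_E3}) for $g(X_T)\Gamma_T$ together with the partial Malliavin machinery, reducing everything to the estimate (\ref{eq:keyestimate}) of Lemma \ref{lem:key}. Since that lemma is stated for $Z\Gamma_T(\rho)$ with $Z\in\DD_B^{1,2p}(\R)$, I would first fix $p\ge 2$, write $\Gamma_T=\int_0^1\Gamma_T(\rho)\,d\rho$, and choose the approximating sequence $(Z_\ell)\subset\DD_B^{1,2p}(\R)$ with $Z_\ell\to g(X_T)$ in $L^{2p}(\W_B)$ announced before Lemma \ref{lem:key}; all the bounds below are uniform in $\ell$ and integrable in $\rho$ thanks to (A4), so the conclusion follows by passing to the limit and using Minkowski's inequality in $\rho$. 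Writing $\Psi_s^j:=\int_{\eta(s)}^s\nabla(h^j)(X_r)\sigma(X_r)\,dB_r$ and splitting $g(X_T)\Gamma_T=E^{\W_B}[g(X_T)\Gamma_T]+\delta_B(f)$, the contribution of the first, $\F_T^Y$-measurable, summand vanishes: it factors out of $E^{\W_B}$ and leaves $E^{\W_B}[\sum_j\delta_Y(\Psi^j)]=\sum_j\delta_Y(E^{\W_B}[\Psi^j])=0$ because each $\Psi_s^j$ is a mean-zero $B$-integral. This is the exact analogue of the first display in the proof of Proposition \ref{prop:E_3}.

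The heart of the argument is to rewrite the remaining term $E_1=\sum_j E^{\W_B}[\delta_B(f)\,\delta_Y(\Psi^j)]$ as a $Y$-Skorohod integral plus a lower-order remainder. Because $f$ is not adapted to $\F_s^B\vee\F_s^Y$, It\^o calculus is unavailable; instead I would apply the Skorohod product rule in the $Y$-variable, $\delta_B(f)\,\delta_Y(\Psi^j)=\delta_Y(s\mapsto\delta_B(f)\Psi_s^j)+\int_0^T D_s^{Y,j}(\delta_B(f))\,\Psi_s^j\,ds$, valid since $D^Y\Psi_s^j=0$. Commuting $D^Y$ with $\delta_B$ (they act on independent coordinates) and then taking $E^{\W_B}$, which commutes with $\delta_Y$ and with $D^Y$ by Lemma \ref{lem:basicDD}(i), yields
\[
E_1=\sum_j\delta_Y(K^j)+\sum_j\int_0^T E^{\W_B}\big[\delta_B(D_s^{Y,j}f)\,\Psi_s^j\big]\,ds,\qquad K_s^j:=E^{\W_B}[\delta_B(f)\,\Psi_s^j].
\]
At this point the crucial simplification is the conditional $B$-It\^o isometry (cf.\ the duality formula preceding Lemma \ref{lem:crucial}): since $\delta_B(f)$ and $\Psi_s^j$ are adapted $B$-integrals and $\Psi_s^j$ is supported on $[\eta(s),s]$, one gets $K_s^j=\int_{\eta(s)}^s E^{\W_B}[f_u\cdot\nabla(h^j)(X_u)\sigma(X_u)]\,du$ and, likewise, $E^{\W_B}[\delta_B(D_s^{Y,j}f)\Psi_s^j]=\int_{\eta(s)}^s E^{\W_B}[(D_s^{Y,j}f)_u\cdot\nabla(h^j)(X_u)\sigma(X_u)]\,du$. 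Both quantities are now localized to the interval $[\eta(s),s]$ of length at most $T/n$, which is the source of the order $1/n$.

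It then remains to estimate the two pieces. For the Skorohod part, Lemma \ref{lem:conti_delta} gives $\|\sum_j\delta_Y(K^j)\|_p\le C\sum_j\|K^j\|_{\DD_Y^{1,p}(L^2([0,T];\R))}$. The $L^p(\W_Y;L^2_s)$-norm of $K^j$ is handled by Cauchy--Schwarz on $[\eta(s),s]$ and Lemma \ref{lem:crucial}, producing a factor $T/n$; for the derivative I would use $D_r^Y K_s^j=\int_{\eta(s)}^s E^{\W_B}[D_r^Y J_u^B(Z\Gamma_T(\rho))\cdot\nabla(h^j)(X_u)\sigma(X_u)]\,du$ (here $f_u=J_u^B(Z\Gamma_T(\rho))$, and $D^Y$ passes through $E^{\W_B}$ and annihilates $X$), which is exactly the integrand controlled by (\ref{eq:keyestimate}); after Cauchy--Schwarz over $[\eta(s),s]$ and a Fubini over the indicator $1_{[\eta(s),s]}(u)$ this contributes another factor $T/n$. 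For the remainder term I would again recognize $(D_s^{Y,j}f)_u=D_s^{Y}J_u^B(Z\Gamma_T(\rho))$, swap the order of the $s$- and $u$-integrations using $\int_0^T 1_{[\eta(s),s]}(u)\,ds\le T/n$, and apply Cauchy--Schwarz to land on the same quantity bounded by Lemma \ref{lem:key}, again with a factor $T/n$. Taking $\theta^j=\nabla(h^j)\sigma$, whose moments are finite under (A2)--(A3) and (\ref{eq:proof_2}), all constants are finite, and collecting the factors $T/n$ gives $\|E_1\|_p\le C/n$.

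The main obstacle is precisely the non-adaptedness of $f$: one cannot differentiate inside $E[\,\cdot\mid\F_T^Y]$ by It\^o's formula, so the whole estimate must be carried out through the interplay of the operators $D^B,D^Y,\delta_B,\delta_Y$ and $E^{\W_B}$ on the product space. The delicate points are (i) justifying the commutations and the Fubini-type exchanges (Lemmas \ref{lem:fubini} and \ref{lem:basicDD}) at the level of the smooth functionals spanning $\DD_B^{1,p}(\DD_Y^{1,2}(\R))$ and then passing to the limit, and (ii) arranging the integration by parts so that \emph{both} resulting pieces localize to $[\eta(s),s]$ and hence reduce to the single estimate (\ref{eq:keyestimate}), which is the technical core already established in Lemma \ref{lem:key}.
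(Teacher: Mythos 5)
Your proposal is correct and follows essentially the same route as the paper's proof: the same reduction to $E_1(\rho,Z)$ with an approximating sequence $Z_\ell\to g(X_T)$ in $L^{2p}(\W_B)$, the same kernel $K_s^j=\int_{\eta(s)}^s E^{\W_B}[f_u\theta^j_u]\,du$ with $f_u=J_u^B(Z\Gamma_T(\rho))$, the same splitting into a $\delta_Y$-term plus a trace term localized to $[\eta(s),s]$, and the same final estimates via Lemma \ref{lem:conti_delta}, Lemma \ref{lem:crucial} and the key estimate (\ref{eq:keyestimate}) of Lemma \ref{lem:key}. The only difference is in the bookkeeping used to reach the central identity: you invoke the anticipating product rule $F\delta_Y(u)=\delta_Y(Fu)+\int_0^T D_s^YF\cdot u_s\,ds$ together with the commutations $E^{\W_B}\circ\delta_Y=\delta_Y\circ E^{\W_B}$ and $D^Y\delta_B=\delta_B D^Y$ (which, as you note, still need justification on smooth functionals), whereas the paper derives the identical decomposition by a blockwise It\^o expansion followed by Lemma \ref{lem:comp_dual} (the step-process version of the same product rule) and the Fubini Lemma \ref{lem:fubini} --- equivalent content, identical estimates.
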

\begin{proof}
We first define 
\[
E_1(\rho) := E\Big[g(X_T)\Gamma_T(\rho) \sum_{j=1}^d\int_0^T \Big(\int_{\eta(s)}^s \nabla(h^j)(X_r)\sigma(X_r)dB_r\Big) dY^j_s \Big|\F_T^Y\Big] 
\]
and then 
\[
\|E_1\|_p \leq \int_0^1 \|E_1(\rho)\|_p d\rho \leq \sup_{0\leq \rho \leq 1} \|E_1(\rho)\|_p. 
\]
So it suffices to give an estimate for $\|E_1(\rho)\|_p$. 

Let us define for $Z \in \DD_B^{1,2p}(\R)$ 
\[
E_1(\rho, Z) := E\Big[Z\Gamma_T(\rho) \sum_{j=1}^d\int_0^T \Big(\int_{\eta(s)}^s \nabla(h^j)(X_r)\sigma(X_r)dB_r\Big) dY^j_s \Big|\F_T^Y\Big].
\]
We shall show that 
\begin{equation}\label{eq:finish}
\|E_1(\rho, Z)\|_p \leq \frac{C}{n} \|Z \Gamma_T(\rho)\|_p, 
\end{equation}
and then taking an approximation sequence $(Z_\ell)_\ell \subset \DD_B^{1,2p}(\R)$ such that $Z_\ell \rightarrow g(X_T)$ in $L^{2p}$, 
we have
\[
\|E_1(\rho)\|_p \leq \frac{C}{n}\|g(X_T)\Gamma_T(\rho)\|_p \leq \frac{\tilde{C}(p,T)}{n},  
\]
which is what we want to prove. 

For notational simplicity, we prove (\ref{eq:finish}) only the case where $B$ and $Y$ are one dimensional Brownian motions. 
Let $\theta_r = (h)'(X_r)\sigma(X_r)$. By It\^o's formula, 
\begin{align*}
\int_0^T\int_{\eta(s)}^s \theta_r dB_r dY_s
= \sum_{i=0}^{n-1} \Big( \Big(\int_{t_i}^{t_{i+1}} \theta_s dB_s\Big) (Y_{t_{i+1}}-Y_{t_i}) 
- \int_{t_i}^{t_{i+1}}(Y_s-Y_{t_i})\theta_s dB_s \Big).
\end{align*}
Set $f_s = f_s(\rho, Z) := J_s^B(Z\Gamma_T(\rho))$. 
We can deduce that 
\[
E^{\W_B}\Big[ E^{\W_B}[Z\Gamma_T(\rho)] \int_0^T\int_{\eta(s)}^s \theta_r dB_r dY_s \Big] = 0
\]
and 
\begin{align*}
&E^{\W_B}\Big[\int_0^T f_s dB_s \int_0^T\int_{\eta(s)}^s \theta_r dB_r dY_s \Big]
\\ &= E^{\W_B}\Big[\int_0^T f_s dB_s \sum_{i=0}^{n-1} \Big( \Big(\int_{t_i}^{t_{i+1}} \theta_s dB_s\Big) (Y_{t_{i+1}}-Y_{t_i}) 
- \int_{t_i}^{t_{i+1}}(Y_s-Y_{t_i})\theta_s dB_s \Big) \Big]
\\ &= E^{\W_B}\Big[\sum_{i=0}^{n-1} \Big( \Big(\int_{t_i}^{t_{i+1}} f_s \theta_s ds \Big) (Y_{t_{i+1}}-Y_{t_i}) 
- \int_{t_i}^{t_{i+1}}(Y_s-Y_{t_i})f_s\theta_s ds \Big) \Big]
\\ &= \sum_{i=0}^{n-1} \Big( \Big(\int_{t_i}^{t_{i+1}} E^{\W_B}[f_s \theta_s] ds \Big) (Y_{t_{i+1}}-Y_{t_i}) 
- \int_{t_i}^{t_{i+1}}(Y_s-Y_{t_i})E^{\W_B}[f_s\theta_s] ds \Big).
\end{align*}
By using Lemma \ref{lem:comp_dual} and the fact that $D^Y E^{\W_B}[\cdot] = E^{\W_B} [D^Y \cdot]$ in Lemma \ref{lem:basicDD}, 
it holds that 
\begin{align*}
& \sum_{i=0}^{n-1} \Big(\int_{t_i}^{t_{i+1}} E^{\W_B}[f_s \theta_s] ds \Big) (Y_{t_{i+1}}-Y_{t_i}) 
\\ & = \delta_Y\Big( \sum_{i=0}^{n-1} \Big(\int_{t_i}^{t_{i+1}} E^{\W_B}[f_s \theta_s] ds \Big) 1_{[t_i,t_{i+1})}(\cdot)\Big)
+ \sum_{i=0}^{n-1} \int_{t_i}^{t_{i+1}}\int_{t_i}^{t_{i+1}} E^{\W_B}[(D_r^Y f_s) \theta_s] ds dr, 
\end{align*}
and 
\begin{align*}
& \sum_{i=0}^{n-1} \int_{t_i}^{t_{i+1}}(Y_s-Y_{t_i})E^{\W_B}[f_s \theta_s] ds 
\\ & = \int_0^T \Big(\delta_Y(E^{\W_B}[f_s\theta_s] 1_{[\eta(s), s)}(\cdot)) + \int_{\eta(s)}^s E^{\W_B}[(D_r^Y f_s) \theta_s] dr \Big)ds
\\ & = \delta_Y\Big( \sum_{i=0}^{n-1} \Big(\int_{\cdot}^{t_{i+1}} E^{\W_B}[f_s \theta_s] ds \Big) 1_{[t_i,t_{i+1})}(\cdot) \Big) 
+ \int_0^T \int_{\eta(s)}^s E^{\W_B}[(D_r^Y f_s) \theta_s] dr ds.
\end{align*}
Here we used Lemma \ref{lem:fubini} in the second equality. 
Consequently we derive the formula 
\begin{align*}
& E^{\W_B}\Big[\int_0^T f_s dB_s \int_0^T\int_{\eta(s)}^s \theta_r dB_r dY_s \Big] 
\\ &= \delta_Y\Big( \sum_{i=0}^{n-1} \Big(\int_{t_i}^{\cdot} E^{\W_B}[f_s \theta_s] ds \Big) 1_{[t_i,t_{i+1})}(\cdot)\Big)
+ \int_{0}^{T}\int_{\eta(r)}^{r} E^{\W_B}[(D_r^Y f_s) \theta_s] ds dr. 
\end{align*}
Using the above formula and Lemma \ref{lem:conti_delta}, we finally get the estimate 
\begin{align*}
\|E_1(\rho,Z)\|_p^p &\leq \frac{C_1}{n^p}E\Big[ \Big( \int_0^T |f_s|^2 ds \Big)^{p/2}\Big]  
+ \frac{C_2}{n^{p/2}} 
E^{\W_Y}\Big[\Big( \sum_{i=0}^{n-1} \int_{t_i}^{t_{i+1}} \int_{t_i}^{t_{i+1}} |E^{\W_B}[(D_r^Yf_s)\theta_s]|^2 ds dr \Big)^{p/2}\Big]
\\ & \leq \frac{C_3}{n^p} \|Z\Gamma_T(\rho)\|_p^p 
+ \frac{C_4}{n^p} E^{\W_Y}\Big[ \Big( \int_0^T \esssup\displaylimits_{0\leq r \leq T} |E^{\W_B}[ (D_r^Y f_s) \theta_s]|^2 ds \Big)^{p/2}\Big] 
\end{align*}
Applying Lemma \ref{lem:key} to the last term, we obtain the result (\ref{eq:finish}). This finishes the proof. 
\end{proof}

%%%%%%%%%%%%%%%%%%%%%%%%%%%%%%%%%%%%%%%%%%%%%%%%%%%%%%%%%%%%%%%%%%%%%%%%%%%%%%%%%%%%%
\section{Conclusion and some remarks on further research}
The generalization discussed in the present paper consists of two parts. 
The first one is to determine the rate of convergence even if $g$ is irregular, 
and the analysis relies on 
the duality of stochastic integrals in Section \ref{subsec:dualityofsi} 
and a sharp estimate via partial Malliavin calculus in Section \ref{subsec:partial}.
The second one is the estimate by $L^p$-norm with $p > 2$, 
which is derived from the computation of the Skorohod integral 
and its continuity by means of Lemma \ref{lem:conti_delta}. 

We finally remark three problems which should be take into account in future research.

i) 
The author expects that the method of proof works as well 
if $X$ is the solution of a L\'evy-driven stochastic differential equation (independent of $W$). 
Of course, we need several techniques on Wiener-Poisson space 
such as the Clark-Ocone formula and its moment estimates. 
In addition to the duality of the form 
\[
E\Big[ \int_0^T f_s dB_s \int_{t_i}^{t_{i+1}} \theta_s dB_s \Big] 
= E\Big[ \int_{t_i}^{t_{i+1}} f_s \theta_s ds \Big], 
\]
we will also use the duality for a Poisson random measure $N(dx, dt)$ of the form
\[
E\Big[ \int_0^T f_s(x) \tilde{N}(dx, ds) \int_{t_i}^{t_{i+1}} \theta_s(x) \tilde{N}(dx, ds) \Big] 
= E\Big[ \int_{t_i}^{t_{i+1}} \int_{\R^N} f_s(x) \theta_s(x) \nu(x)dx ds \Big]
\]
where $\tilde{N}$ is the compensated  Poisson random measure and $\nu$ is the L\'evy measure associated with $N(dx,ds)$.
The detailed discussion is left for future work. 

ii) 
If $B$ and $W$ are not independent (more generally, $X$ depends on $W$), 
we cannot apply the procedure of our proof to the error estimates. 
To begin with, the rate of convergence is not clear ($n^{-1/2}$ or $n^{-1}$) in that case. 
Similarly, the case where the coefficient $h$ depends on $Y$ is also quite complicated situation 
to determine the rate of convergence. 

iii) Another subject of interest in this field 
is an asymptotic limit (central limit theorem) with rate $1/n^\alpha$ by means of 
\[
n^\alpha \Big( E[g(X_T)\Phi_T|\F_T^Y] - E[g(X_T)\tilde{\Phi}_T|\F_T^Y] \Big) \rightarrow G \not= 0 \ \mbox{ in law, } 
\]
which implies the optimal rate of convergence of the conditional expectation. 
The result in Theorem \ref{thm:main} is not sufficient for this purpose 
since we merely can take $\alpha = 1-\epsilon$ (any $\epsilon >0$) with $G=0$. 

\section*{Acknowledgement}
The author would like to thank Professor Masatoshi Fujisaki for giving him 
the opportunity to study nonlinear filtering and motivating this research through helpful discussion. 
This work was supported by JSPS KAKENHI Grant Number 12J03138.

%%%%%%%%%%%%%%%%%%%%%%%%%%%%%%%%%%%%%%%%%%%%%%%%%%%%%%%%%%%%%%%%%%%%%%%%%%%%%%%%%%%%%%%%%%%%%%%%%%%%%%%%%%%%%%%%%%%%%%%%%%%%%%%%%%%%%

\end{document}